\documentclass[a4paper,reqno]{amsart}
\usepackage{amsmath,amssymb,amsthm,graphicx,mathrsfs,url}
\usepackage[usenames,dvipsnames]{color}
\definecolor{darkred}{rgb}{0.4,0.1,0.1}
\definecolor{darkblue}{rgb}{0.1,0.1,0.4}

\usepackage{tikz}
\usetikzlibrary{datavisualization}
\usetikzlibrary{datavisualization.formats.functions}
\usetikzlibrary{hobby,backgrounds,patterns}
%

\usepackage[normalem]{ulem}

\numberwithin{equation}{section}
\theoremstyle{plain}

\newtheorem{theorem}{Theorem}[section]
\newtheorem{lemma}[theorem]{Lemma}

\newtheorem{proposition}[theorem]{Proposition}
\newtheorem{corollary}[theorem]{Corollary}

\theoremstyle{remark}
\newtheorem{remark}[theorem]{Remark}

\theoremstyle{definition}

\newtheorem{definition}[theorem]{Definition}

\newcommand\cH{\mathcal H}

\newcommand\eps{\varepsilon}

\DeclareMathOperator{\diver}{div}

\definecolor{darkgreen}{rgb}{0.1,0.45,0.1}
\definecolor{darkblue}{rgb}{0.1,0.1,0.4}
\definecolor{darkgrey}{rgb}{0.5,0.5,0.5}

\definecolor{darkred}{rgb}{0.6,0.0,0.0}

\newcommand\void[1]{}

\def\eps{\varepsilon}

\renewcommand{\phi}{\varphi}


\def\sa{\mathfrak a}

   \def\cH{{\mathcal H}}   
      
      \def\cO{{\mathcal O}}

\def\R{\mathbb{R}}
\def\C{\mathbb{C}}

\newcommand{\dom}{\mathrm{dom}\,}

\newcommand{\e}{\textup{e}}

\allowdisplaybreaks

\def\dd{{\,\mathrm d}}

\newcounter{counter_a}

\title[A new approach to the hot spots conjecture]{A new approach to the hot spots conjecture}

\author[J.~Rohleder]{Jonathan Rohleder}
\address{Matematiska institutionen \\ Stockholms universitet \\
106 91 Stockholm \\
Sweden}
\email{jonathan.rohleder@math.su.se}


\begin{document}

\begin{abstract}
We introduce a new variational principle for the study of eigenvalues and eigenfunctions of the Laplacians with Neumann and Dirichlet boundary conditions on planar domains. In contrast to the classical variational principles, its minimizers are gradients of eigenfunctions instead of the eigenfunctions themselves. This variational principle enables us to give an elementary analytic proof of the famous hot spots conjecture for the class of so-called lip domains. More specifically, we show that each eigenfunction corresponding to the lowest positive eigenvalue of the Neumann Laplacian on such a domain is strictly monotonous along two mutually orthogonal directions. In particular, its maximum and minimum  may only be located on the boundary.
\end{abstract}

\maketitle

\section{Introduction}

The aim of this paper is to establish a new approach to the famous hot spots conjecture and to provide an elementary analytic proof of this conjecture for a  class of planar domains. The crucial novelty is a non-standard variational principle for the non-zero eigenvalues of the Laplacians with Neumann and Dirichlet boundary conditions on simply connected planar domains. 

Variational principles are a classical tool in the analysis of eigenvalues and corresponding eigenfunctions of partial differential operators. Consider, for instance, the Neumann Laplacian $- \Delta_{\rm N}$ on a bounded, connected, sufficiently regular open set $\Omega \subset \R^2$, i.e.\ the Laplacian defined on functions on $\Omega$ whose derivative in the direction of the outer unit normal vector $\nu (x)$ vanishes for allmost all points $x$ on the boundary $\partial \Omega$. The operator $- \Delta_{\rm N}$ is self-adjoint in $L^2 (\Omega)$ with a purely discrete spectrum and its lowest eigenvalue is zero with the constant functions as its corresponding eigenfunctions. Its second (and lowest non-trivial) eigenvalue can be expressed as
\begin{align}\label{eq:minMaxclassic}
 \mu_2 = \min_{\substack{\psi \in H^1 (\Omega) \setminus \{0\} \\ \int_\Omega \psi = 0}} \frac{\int_\Omega |\nabla \psi|^2}{\int_\Omega |\psi|^2},
\end{align}
and a function $\psi$ in the Sobolev space $H^1 (\Omega)$ with vanishing integral is a minimizer if and only if $\psi$ is an eigenfunction of $- \Delta_{\rm N}$ corresponding to $\mu_2$. 

The description \eqref{eq:minMaxclassic}, as well as its counterparts for higher eigenvalues, other boundary conditions or more general elliptic differential operators, have proven to be indispensable tools in spectral theory. However, they are less suitable for the study of, e.g., critical points or further properties of an eigenfunction that are naturally closely related to its partial derivatives. This inspired us to establish an alternative variational description of the eigenvalues of the Laplacians with Neumann or Dirichlet boundary conditions on simply connected planar domains. Its most striking special case is the following expression for the first non-trivial eigenvalue $\mu_2$ of the Neumann Laplacian. The result is discussed in full detail in Section~\ref{sec:var}, see in particular Theorem \ref{thm:minMaxDN}, and we suspect that it may be useful also in connection with other questions in spectral geometry.

\begin{theorem}\label{thm:minMaxIntro}
Assume that $\Omega \subset \R^2$ is a bounded, simply connected Lipschitz domain with piecewise $C^\infty$-smooth boundary and that all its corners, if any, are convex. Then
\begin{align}\label{eq:minPrincipleNeumannIntro}
 \mu_2 = \min_{u = \binom{u_1}{u_2} \in \cH_{\rm N} \setminus \{0\}} \frac{\int_\Omega \left( |\nabla u_1|^2 + |\nabla u_2|^2 \right) - \int_{\partial \Omega} \kappa  \big( |u_1|^2 + |u_2|^2 \big)}{\int_\Omega \left( |u_1|^2 + |u_2|^2 \right)},
\end{align}
where $\cH_{\rm N}$ consists of all vector fields with components in the Sobolev space $H^1 (\Omega)$ such that their traces satisfy $\langle u, \nu \rangle = 0$ a.e.\ on $\partial \Omega$, and $\kappa$ is the signed curvature on $\partial \Omega$ w.r.t.\ the outer unit normal, defined on all boundary points except corners. The minimizers of \eqref{eq:minPrincipleNeumannIntro} are precisely the gradients of eigenfunctions $\psi$ of the Neumann Laplacian corresponding to $\mu_2$.
\end{theorem}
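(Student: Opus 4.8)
The plan is to rewrite the vectorial Rayleigh quotient in \eqref{eq:minPrincipleNeumannIntro} in a strikingly simpler form, and then to split it into two scalar Rayleigh quotients by a Hodge-type decomposition of the competitor field. The first, and central, step is the identity
\[
  \int_\Omega\big(|\nabla u_1|^2+|\nabla u_2|^2\big)-\int_{\partial\Omega}\kappa\big(|u_1|^2+|u_2|^2\big)\;=\;\int_\Omega\big((\diver u)^2+(\curl u)^2\big),\qquad u\in\cH_{\rm N},
\]
where $\curl u\defeq\partial_1u_2-\partial_2u_1$. Pointwise one has the null-Lagrangian identity $|\nabla u_1|^2+|\nabla u_2|^2=(\diver u)^2+(\curl u)^2-2\det\nabla u$, and $\det\nabla u=\partial_1(u_1\partial_2u_2)-\partial_2(u_1\partial_1u_2)$ is a divergence. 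Integrating over $\Omega$ and applying the divergence theorem turns $-2\int_\Omega\det\nabla u$ into a boundary integral which, using $\langle u,\nu\rangle=0$ on $\partial\Omega$ to express $u$ there through its tangential component and invoking the Frenet relation $\partial_sT=\kappa\nu$ for the arclength-parametrised boundary curve with unit tangent $T$, equals exactly $-\int_{\partial\Omega}\kappa(|u_1|^2+|u_2|^2)$. I would carry this out first for $u$ smooth up to $\partial\Omega$ and then extend to all $u\in\cH_{\rm N}$ by density, both sides being continuous in the $H^1(\Omega)$-norm since $\kappa$ is bounded on $\partial\Omega$ away from the corners.

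Next, given $u\in\cH_{\rm N}$, I would decompose it as follows: let $\phi$ solve the Neumann problem $\Delta\phi=\diver u$ in $\Omega$, $\partial_\nu\phi=0$ on $\partial\Omega$ — solvable since $\int_\Omega\diver u=\int_{\partial\Omega}\langle u,\nu\rangle=0$ — normalised so that $\int_\Omega\phi=0$. Then $w\defeq u-\nabla\phi$ is divergence-free with vanishing normal trace, hence $w=\nabla^\perp q\defeq(-\partial_2q,\partial_1q)$ by simple connectedness, and the vanishing normal trace forces $q$ to be constant along the connected boundary $\partial\Omega$, so that $q\in H^1_0(\Omega)$ after subtracting that constant. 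The summands $\nabla\phi$ and $\nabla^\perp q$ are orthogonal in $L^2(\Omega)$, and $\diver u=\Delta\phi$, $\curl u=\Delta q$; hence, by the identity above, the quotient in \eqref{eq:minPrincipleNeumannIntro} equals
\[
  \frac{\|\Delta\phi\|_{L^2(\Omega)}^2+\|\Delta q\|_{L^2(\Omega)}^2}{\|\nabla\phi\|_{L^2(\Omega)}^2+\|\nabla q\|_{L^2(\Omega)}^2}.
\]

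I would then bound the two scalar pieces separately. For the Neumann part, $\|\nabla\phi\|^2=-\int_\Omega\phi\,\Delta\phi\le\|\phi\|\,\|\Delta\phi\|$ together with the Poincar\'e inequality $\mu_2\|\phi\|^2\le\|\nabla\phi\|^2$ — which is exactly \eqref{eq:minMaxclassic}, applicable because $\int_\Omega\phi=0$ — gives $\mu_2\|\nabla\phi\|^2\le\|\Delta\phi\|^2$, with equality precisely when $\phi$ is a Neumann eigenfunction for $\mu_2$ (by the equality case of \eqref{eq:minMaxclassic}). For the Dirichlet part, the same computation with $q\in H^1_0(\Omega)$ and the Dirichlet Poincar\'e inequality yields $\lambda_1(\Omega)\|\nabla q\|^2\le\|\Delta q\|^2$, where $\lambda_1(\Omega)$ is the lowest Dirichlet eigenvalue. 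Since $\lambda_1(\Omega)>\mu_2$ by the classical inequality of Payne, Friedlander and Filonov, the quotient is always $\ge\mu_2$; and equality forces $\|\nabla q\|=0$, i.e.\ $u=\nabla\phi$ with $\phi$ a $\mu_2$-eigenfunction. Conversely, any such $\phi$ lies in $H^2(\Omega)$ because the corners of $\Omega$ are convex, so $\nabla\phi\in\cH_{\rm N}$, and the identity above gives quotient $=\|\Delta\phi\|^2/\|\nabla\phi\|^2=\mu_2^2\|\phi\|^2/(\mu_2\|\phi\|^2)=\mu_2$; this shows at once that the minimum is attained, equals $\mu_2$, and is realised exactly by the gradients of the $\mu_2$-eigenfunctions.

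The main obstacle is the first step: carrying out the boundary integration by parts so as to recover precisely the curvature term, and justifying the passage from smooth fields to all of $\cH_{\rm N}$ on a domain with corners. This is where the hypothesis that $\Omega$ be a Lipschitz domain with piecewise $C^\infty$, convex corners is genuinely used — convexity of the corners also guarantees the $H^2$-regularity of the Neumann eigenfunctions, without which $\nabla\psi$ would not lie in $\cH_{\rm N}$ and the assertion about minimisers would break down. The remaining ingredients are elementary, apart from the inequality $\lambda_1(\Omega)>\mu_2$, which I would take as a black box.
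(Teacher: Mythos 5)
Your argument is correct in substance and takes a genuinely different route from the paper, which is worth comparing. The paper constructs a self-adjoint operator $A$ on vector fields via the form $\sa$, proves in its Lemma~\ref{lem} that $\sa[u,v]=-\int_\Omega(\Delta u_1\overline{v_1}+\Delta u_2\overline{v_2})$ for a restricted class of $u$, shows that gradients of Neumann eigenfunctions and rotated gradients of Dirichlet eigenfunctions exhaust the spectrum of $A$ via the Helmholtz decomposition, and then invokes the min-max principle together with $\mu_2<\lambda_1$. You instead prove the cleaner, self-contained identity $\sa[u]=\|\div u\|^2_{L^2}+\|\curl u\|^2_{L^2}$ for all $u\in\cH_{\rm N}$, then decompose $u=\nabla\phi+\nabla^\perp q$ and finish with two scalar Cauchy--Schwarz/Poincar\'e estimates and, again, $\mu_2<\lambda_1$. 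What your route buys is that it dispenses entirely with the operator $A$ and its spectral theory: the whole argument reduces to one pointwise algebraic identity (the null-Lagrangian), one boundary integration by parts, and two elementary Rayleigh-quotient inequalities. What it loses is the full spectral picture (the paper's Theorem~\ref{thm:translateEV} identifies all $\eta_j$, which is needed for the min-max statement \eqref{eq:minMax}); your argument targets only the bottom of the spectrum, which happens to be all this theorem requires. The two approaches share the same computational core --- the boundary integration by parts that produces the curvature term via the Frenet equations is exactly the content of the paper's Lemma~\ref{lem}, just packaged differently --- and both require simple connectedness for the Helmholtz decomposition and $H^2$-regularity (hence convex corners) so that $\nabla\psi\in\cH_{\rm N}$ and so that $\phi,q$ in the decomposition lie in $H^2$.

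Two small points. First, there is a sign slip in your parenthetical: the boundary integral that $-2\int_\Omega\det\nabla u$ produces equals $+\int_{\partial\Omega}\kappa(|u_1|^2+|u_2|^2)$, not $-\int_{\partial\Omega}\kappa(|u_1|^2+|u_2|^2)$; with the sign you wrote, the claimed identity would fail (e.g.\ on the unit disk with $u=(-x_2,x_1)^\top$ one gets $-2\int_\Omega\det\nabla u=-2\pi=\int_{\partial\Omega}\kappa|u|^2$). The final identity you state is nevertheless the correct one. Second, the density step you flag as the main obstacle --- approximating arbitrary $u\in\cH_{\rm N}$ by smooth fields with vanishing normal trace on a domain with corners --- can be bypassed entirely by the Helmholtz decomposition you already introduce: since $\phi\in\dom(-\Delta_{\rm N})$ and $q\in\dom(-\Delta_{\rm D})$ lie in $H^2(\Omega)$, one can verify $\sa[\nabla\phi]=\|\Delta\phi\|^2$, $\sa[\nabla^\perp q]=\|\Delta q\|^2$ and $\sa[\nabla\phi,\nabla^\perp q]=0$ directly from Green's identity and the boundary conditions (this is essentially the paper's Lemma~\ref{lem} applied to pure gradients and pure rotated gradients), which yields the identity for every $u\in\cH_{\rm N}$ without any separate approximation argument.
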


We point out that the curvature $\kappa (x)$ is well-defined for almost all $x \in \partial \Omega$, where only possible corners are excluded. Then $\kappa$ is a bounded function on $\partial \Omega$ with jumps at the corners. Due to a capacity argument, the behavior at the (at most finitely many) corners is not of any significance for our purposes; cf.\ the proof of Lemma \ref{lem}. Note that the sign of $\kappa$ is chosen such that $\kappa$ is negative locally where $\Omega$ is convex. 

Theorem \ref{thm:minMaxIntro} is actually a special case of Theorem \ref{thm:minMaxDN} below, where variational expressions for all non-trivial eigenvalues of the Neumann and Dirichlet Laplacians are derived. The crucial idea in the proof is to establish a self-adjoint operator that acts on vector fields in $L^2 (\Omega; \C^2)$ componentwise as the Laplacian and whose eigenvalues are exactly those of the Neumann and Dirichlet Laplacians except zero; its corresponding eigenfunctions can be obtained from the Neumann and Dirichlet Laplacian eigenfunctions by taking gradients. The assumption that $\Omega$ is simply connected comes in naturally through the use of the so-called Helmholtz-decomposition of the space $L^2 (\Omega; \C^2)$.

Our second aim is to apply the new variational principle to the famous hot spots conjecture. This conjecture suggests that a (any) eigenfunction of the Neumann Laplacian on a bounded domain corresponding to its first positive eigenvalue takes its maximum and minimum (only) on the boundary. This would imply that the hottest and coldest spots in an insulated medium with a ``generic'' initial heat distribution converge to the boundary for large time; see, e.g., the discussion in \cite{BB99}. 

The mathematical formulation of the hot spots conjecture seems to have been mentioned first by Rauch at a conference in 1974 and has in written form appeared first in Kawohl's book \cite{K85}. Most results known so far deal with the planar case $d = 2$. Ba\~nuelos and Burdzy \cite{BB99} proved that the conjecture is true for obtuse triangles and sufficiently long convex domains with symmetries, and Atar and Burdzy~\cite{AB04} showed it for so-called lip-domains, i.e.\ domains enclosed by the graphs of two Lipschitz continuous functions with Lipschitz constants at most one. Jerison and Nadirashvili proved it for a class of domains symmetric w.r.t.\ both coordinate axes \cite{JN00}. On the other hand Burdzy and Werner \cite{BW99} and Burdzy \cite{B05} constructed counterexamples given by certain multiply-connected domains, see also the numerical study \cite{K21}. It remains open whether the conjecture is true for all convex or even all simply connected domains. The most recent advances include a prove for general triangles by Judge and Mondal \cite{JM20,JM20err}, which was preceeded by Siudeja's partial result \cite{S15}. Krej\v{c}i\v{r}\'ik and Tu\v{s}ek \cite{KT19} studied the conjecture for thin curved strips and Steinerberger \cite{S20} showed that the maxima and minima of second eigenfunctions of the Neumann Laplacian on a convex domain must at least be close, in a specified sense, to the boundary.

It seems that most proofs of previous results on the hot spots conjecture are either of probabilistic nature, dealing with reflected Brownian motion \cite{AB04,BB99,BB00,B05,BW99,P02,S20} or are specific to particular shapes or symmetries \cite{JM20,JN00,KT19,S15}. Based on our Theorem~\ref{thm:minMaxIntro} it is now possible to give a purely analytic and elementary alternative proof of the hot spots conjecture for lip domains. Given an eigenfunction $\psi$ corresponding to $\mu_2$, by following the classical Courant argument of taking (component-wise) absolut values of minimizers and applying the minimum principle for superharmonic functions we deduce with the help of Theorem \ref{thm:minMaxIntro} that the components of $\nabla \psi$ are both positive inside $\Omega$, after a suitable rotation of the domain. This requires, however, that $(|\partial_1 \psi|, |\partial_2 \psi|)^\top$ belongs to the function class $\cH_{\rm N}$ of Theorem \ref{thm:minMaxIntro}, which is the case if and only if $\Omega$ is a lip domain, i.e.\ a Lipschitz domain bounded by the graphs of two Lipschitz continuous functions with Lipschitz constant less or equal one; see Definition \ref{def:lip} below.  This leads to a simple proof of the following result from \cite{AB04}. 

\begin{theorem}\label{thm:main}
Assume that $\Omega$ is a lip domain with piecewise $C^\infty$-smooth boundary whose corners are convex. Then the following assertions hold.
\begin{enumerate}
 \item If $\Omega$ is not a square then the first positive eigenvalue $\mu_2$ of the Neumann Laplacian on $\Omega$ is simple, i.e.\ the corresponding eigenfunction $\psi$ is unique up to scalar multiples.
 \item If $\Omega$ is not a rectangle then $\psi$ may be chosen such that its directional derivatives in both directions $\mathbf{e}_1 + \mathbf{e}_2$ and $\mathbf{e}_1 - \mathbf{e}_2$ are positive inside $\Omega$, where $\mathbf{e}_1$ and $\mathbf{e}_2$ are the standard basis vectors in $\R^2$. In particular, $\psi$ does not have any critical point inside $\Omega$ and, hence, takes its maximum and minimum on $\partial \Omega$ only.
\end{enumerate}
\end{theorem}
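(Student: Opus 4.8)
The plan is to combine Theorem~\ref{thm:minMaxIntro} with the classical Courant-type observation that the componentwise modulus of a minimizer is again a minimizer. After a rotation by $45^\circ$ the two distinguished directions $\mathbf{e}_1\pm\mathbf{e}_2$ become the coordinate axes, a lip domain becomes a domain whose outer unit normal $\nu=(\nu_1,\nu_2)$ satisfies $\nu_1\nu_2\le 0$ at almost every boundary point (cf.\ Definition~\ref{def:lip}), and assertion~(ii) becomes the statement that $\psi$ may be chosen with $\partial_1\psi>0$ and $\partial_2\psi>0$ in $\Omega$; the hypotheses of Theorem~\ref{thm:minMaxIntro} are unaffected since such a domain is simply connected. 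The key elementary point is that $\cH_{\rm N}$ is stable under componentwise modulus: if $u=(u_1,u_2)\in\cH_{\rm N}$, so that $u_1\nu_1+u_2\nu_2=0$ a.e.\ on $\partial\Omega$, then at almost every boundary point $\nu_1$ and $\nu_2$ have opposite signs, which forces $u_1$ and $u_2$ to have the same sign or to vanish simultaneously; hence $|u_1|\nu_1+|u_2|\nu_2=0$ a.e., i.e.\ $(|u_1|,|u_2|)\in\cH_{\rm N}$. Moreover $\nabla|u_i|=(\operatorname{sgn}u_i)\,\nabla u_i$ a.e.\ (so $\nabla|u_i|$ and $\nabla u_i$ have equal length a.e.), and the trace of $|u_i|$ is the modulus of the trace of $u_i$; thus replacing $u$ by $(|u_1|,|u_2|)$ leaves both the numerator and the denominator of the quotient in \eqref{eq:minPrincipleNeumannIntro} unchanged.

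Let $\psi$ be a real eigenfunction of the Neumann Laplacian for $\mu_2$. By Theorem~\ref{thm:minMaxIntro}, $\nabla\psi$ minimizes \eqref{eq:minPrincipleNeumannIntro}; by the previous paragraph so does the nonzero field $(|\partial_1\psi|,|\partial_2\psi|)$, which is therefore the gradient of an eigenfunction $\phi$ for $\mu_2$ (normalized by $\int_\Omega\phi=0$, hence real). In particular $\partial_1\phi\ge 0$ and $\partial_2\phi\ge 0$ in $\Omega$. Since $\phi\in C^\infty(\Omega)$ solves $-\Delta\phi=\mu_2\phi$, each $w:=\partial_j\phi$ solves $-\Delta w=\mu_2 w$ as well, so $\Delta w=-\mu_2 w\le 0$; that is, $w$ is a nonnegative superharmonic function on the connected open set $\Omega$. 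By the strong minimum principle, for each $j\in\{1,2\}$ either $\partial_j\phi>0$ everywhere in $\Omega$, or $\partial_j\phi\equiv 0$.

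If both partial derivatives vanish identically, then $\phi$ is constant, contradicting $\mu_2>0$. If exactly one vanishes, say $\partial_1\phi\equiv 0$, then $\phi=h(x_2)$ with $-h''=\mu_2 h$, so $h$ is a nonconstant trigonometric function whose derivative vanishes only at isolated points; the Neumann condition $h'(x_2)\,\nu_2=0$ on $\partial\Omega$ then forces every boundary point to satisfy $\nu_2=0$ or $h'(x_2)=0$, so $\partial\Omega$ is the union of segments parallel to $\mathbf{e}_2$ and of finitely many segments parallel to $\mathbf{e}_1$ lying at the zeros of $h'$; since all corners are convex, $\Omega$ is then a rectangle with sides along the coordinate axes, i.e.\ along $\mathbf{e}_1\pm\mathbf{e}_2$ in the original coordinates. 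Hence, if $\Omega$ is not a rectangle, $\partial_1\phi>0$ and $\partial_2\phi>0$ in $\Omega$: choosing this $\phi$ as the function in~(ii), $\nabla\phi$ vanishes nowhere in $\Omega$, so $\phi$ has no interior critical point and attains its maximum and minimum only on $\partial\Omega$.

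For assertion~(i), assume first that $\Omega$ is not a rectangle. Applying the construction above to an arbitrary real eigenfunction $\chi$ for $\mu_2$ shows that $|\partial_j\chi|$ is strictly positive in $\Omega$, so $\partial_j\chi$ vanishes nowhere there and, by connectedness, has a constant sign ($j=1,2$). If $\chi_1,\chi_2$ were linearly independent eigenfunctions for $\mu_2$, fix $x_0\in\Omega$ and set $t=\partial_1\chi_2(x_0)/\partial_1\chi_1(x_0)$, which is well defined since $\partial_1\chi_1(x_0)\ne 0$; then $\chi_2-t\chi_1$ is a nonzero eigenfunction for $\mu_2$ with $\partial_1(\chi_2-t\chi_1)(x_0)=0$, contradicting the previous sentence (applied to $\chi=\chi_2-t\chi_1$). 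Hence $\mu_2$ is simple. If instead $\Omega$ is a rectangle, separation of variables shows that $\mu_2$ is simple unless $\Omega$ is a square, which completes the proof of~(i). I expect the principal obstacle to be the degenerate-case analysis of the third paragraph — deducing from $\partial_j\phi\equiv 0$ that $\Omega$ is a rectangle, which combines the Neumann condition with the geometry of lip domains — together with the (more routine) verification of what is borrowed from Theorem~\ref{thm:minMaxIntro}: that eigenfunctions on such a domain are regular enough, namely $H^2$ up to the boundary (where the convexity of the corners enters), for their gradients to belong to $\cH_{\rm N}$, so that the modulus construction stays within the admissible class.
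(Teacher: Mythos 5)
Your proof is correct and follows essentially the same strategy as the paper: rotate so that lip domains have $\nu_1\nu_2\le 0$ on the boundary, invoke the new variational principle, observe that $\cH_{\rm N}$ is stable under componentwise absolute value with the Rayleigh quotient unchanged, and apply the strong minimum principle for superharmonic functions to conclude each $\partial_j\phi$ is either strictly positive or identically zero. The one place you diverge from the paper is the degenerate case $\partial_j\phi\equiv 0$: you separate variables, $\phi=h(x_2)$ with $h$ trigonometric, and read off from the Neumann condition $h'(x_2)\nu_2=0$ plus isolatedness of the zeros of $h'$ that the boundary is rectilinear, hence (using the convex-corner hypothesis and total turning) a rectangle, contradicting the standing assumption; the paper instead extracts a boundary arc $\Sigma$ with $\nu\notin\{\pm\mathbf e_1,\pm\mathbf e_2\}$ on which $\nabla\psi$ vanishes, propagates constancy of $\psi$ into an open set via $\partial_2\psi\equiv 0$, and kills $\psi$ by unique continuation. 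Both variants of this sub-argument are sound, and yours has the merit of being entirely elementary; the simplicity argument in~(i) is likewise the same as the paper's, phrased via a cancelling linear combination of eigenfunctions.
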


We point out that a basis of the eigenspace consisting only of real-valued functions may be chosen, and we assume implicitly that $\psi$ is real. It should be mentioned that the main result of \cite{AB04} holds for all lip domains without additional regularity assumptions; in our approach this may be resolved through approximation by more regular lip domains, but this is not the main focus of the present article. For Theorem \ref{thm:minMaxIntro} to hold in this form it is necessary that the domain of the Laplacian with Neumann boundary conditions possesses $H^2$ Sobolev regularity, see Remark \ref{rem}; this fails in the presence of non-convex corners.

The class of lip domains may be considered optimal for Theorem \ref{thm:main} (ii) to hold. In fact, the theorem implies that for these domains none of the partial derivatives of $\psi$ has any proper sign change on the closure of $\Omega$. On the other hand, if $\Omega$ is not a lip domain but sufficiently regular, then it may be rotated such that there exist two subsets of $\partial \Omega$ of positive length such that on one of them all normal vectors belong to the first and on the other to the second open quadrant, say. In this case, the Neumann boundary condition $\langle\nabla \psi, \nu\rangle = 0$ constantly on $\partial \Omega$ would imply that one of the components of $\nabla \psi$ must change sign in between---alternatively, $\psi$ is constant on a nontrivial part of the boundary, which should be possible only in rare cases such as on rectangles. We point out, however, that the assertion (ii) of Theorem \ref{thm:main} is stronger than the actual hot spots conjecture, for the given class of domains. Therefore it is still likely that the latter may hold true for any convex or maybe even simply connected domain.

To give a quick overview on the contents of the article, Section \ref{sec:prel} contains preliminary material on certain spaces of vector fields and reviews some abstract operator theory needed. The aim of Section \ref{sec:var} is to prove the variational principle of which Theorem \ref{thm:minMaxIntro} is a special case. Finally, in Section \ref{sec:hotSpots} this principle is applied to the hot spots conjecture, resulting in an elementary proof of Theorem \ref{thm:main}.

\section{Preliminaries}\label{sec:prel}

In this section we fix some notation and provide some preliminary material.

\subsection{Function spaces and Laplacians}

Let for now $\Omega \subset \R^2$ be any bounded, connected Lipschitz domain, see, e.g., \cite[Chapter 3]{McL}. For each integer $k \geq 1$ we denote by $H^k (\Omega)$ the $L^2$-based Sobolev space of order $k$ on $\Omega$. On the boundary $\partial \Omega$ we will make use of the Sobolev space $H^{1/2} (\partial \Omega)$ of order $1/2$ and its dual space $H^{- 1/2} (\partial \Omega)$. We denote the sesquilinear duality between $H^{- 1/2} (\partial \Omega)$ and $H^{1/2} (\partial \Omega)$ by $(\cdot, \cdot)_{\partial \Omega}$; in particular, if $\psi \in L^2 (\partial \Omega) \subset H^{- 1/2} (\partial \Omega)$ then
\begin{align*}
 (\psi, \xi)_{\partial \Omega} = \int_{\partial \Omega} \psi \overline{\xi}, \qquad \xi \in H^{1/2} (\partial \Omega).
\end{align*}
Recall that the trace map
\begin{align*}
 C^\infty (\overline{\Omega}) \ni u \mapsto u |_{\partial \Omega}
\end{align*}
extends uniquely to a bounded, everywhere defined, surjective operator $H^1 (\Omega) \to H^{1/2} (\partial \Omega)$; for each $u \in H^1 (\Omega)$ we write $u |_{\partial \Omega}$ for its trace. If $u$ belongs to $H^1 (\Omega)$ and $\Delta u$, taken in the distributional sense, belongs to $L^2 (\Omega)$, then we define its normal derivative $\partial_\nu u |_{\partial \Omega}$ to be the unique element in $H^{- 1/2} (\partial \Omega)$ such that 
\begin{align}\label{eq:Green}
 \int_\Omega \Delta u \overline{v} + \int_\Omega \langle \nabla u, \nabla v \rangle = \left( \partial_\nu u |_{\partial \Omega}, v |_{\partial \Omega} \right)_{\partial \Omega} \quad \text{for all}~v \in H^1 (\Omega),
\end{align}
see, e.g., \cite[Lemma 4.3]{McL}; here $\langle \cdot, \cdot \rangle$ denotes the Euclidean inner product in $\C^2$. We will refer to \eqref{eq:Green} as Green's first identity. If $u$ is slightly more regular, say $u \in H^2 (\Omega)$, then $\partial_\nu u |_{\partial \Omega} = \langle \nabla \psi |_{\partial \Omega}, \nu \rangle$, where $\nu$ denotes the outer unit normal vector field on $\partial \Omega$; the latter exists at almost every point of $\partial \Omega$ due to Rademacher's theorem.

The Neumann Laplacian $- \Delta_{\rm N}$ and the Dirichlet Laplacian $- \Delta_{\rm D}$ on $\Omega$ are defined as
\begin{align*}
 - \Delta_{\rm N} u & = - \Delta u, \quad \dom (- \Delta_{\rm N}) = \left\{u \in H^1 (\Omega) : \Delta u \in L^2 (\Omega), \partial_\nu u |_{\partial \Omega} = 0 \right\},
\end{align*}
and
\begin{align*}
 - \Delta_{\rm D} u & = - \Delta u, \quad \dom (- \Delta_{\rm D}) = \left\{u \in H^1 (\Omega) : \Delta u \in L^2 (\Omega), u |_{\partial \Omega} = 0 \right\}.
\end{align*}
Both are unbounded, self-adjoint operators in $L^2 (\Omega)$, and their spectra consist of isolated eigenvalues of finite multiplicities. Let
\begin{align}\label{eq:EVN}
 0 = \mu_1 < \mu_2 \leq \mu_3 \leq \dots 
\end{align}
be an enumeration of the eigenvalues of $- \Delta_{\rm N}$, counted with multiplicities, and let
\begin{align}\label{eq:EVD}
 \lambda_1 < \lambda_2 \leq \lambda_3 \leq \dots 
\end{align}
be the eigenvalues of $- \Delta_{\rm D}$, also these counted according to their multiplicities. As $\Omega$ is connected, the lowest eigenvalue $\mu_1$ of $- \Delta_{\rm N}$ is zero, with multiplicity one and corresponding eigenspace given by the constant functions. The lowest eigenvalue of $- \Delta_{\rm D}$ is positive and has multiplicity one as well.

Let us turn to spaces of vector fields, which will play a major role later on. Denote by $L^2 (\Omega; \C^2)$ the Hilbert space of vector fields on $\Omega$ whose both components belong to $L^2 (\Omega)$. We consider it together with the norm 
\begin{align*}
 \| u \|_{L^2 (\Omega; \C^2)} = \left( \int_\Omega \big(|u_1|^2 + |u_2|^2 \big) \right)^{1/2}, \quad u = \binom{u_1}{u_2} \in L^2 (\Omega; \C^2).
\end{align*}
Analogously, on $H^1 (\Omega; \C^2)$, the space of vector fields with components in $H^1 (\Omega)$, we consider the norm 
\begin{align*}
 \| u \|_{H^1 (\Omega; \C^2)} = \left( \|u\|_{L^2 (\Omega; \C^2)}^2 + \int_\Omega \big(|\nabla u_1|^2 + |\nabla u_2|^2 \big) \right)^{1/2}, \quad u = \binom{u_1}{u_2} \in H^1 (\Omega; \C^2).
\end{align*}
The space
\begin{align*}
 E (\Omega) = \left\{ u \in L^2 (\Omega; \C^2) : \diver u \in L^2 (\Omega) \right\},
\end{align*}
equipped with norm $(\|u\|_{L^2 (\Omega; \C^2)}^2 + \|\diver u\|_{L^2 (\Omega)}^2)^{1/2}$, is a Hilbert space and the trace mapping
\begin{align*}
 C^\infty (\overline{\Omega}; \C^2) \ni u \mapsto \left\langle u |_{\partial \Omega}, \nu \right\rangle 
\end{align*}
extends continuously to a bounded linear operator from $E (\Omega)$ onto $H^{-1/2} (\partial \Omega)$; see \cite[Chapter XIX, \S 1, Theorem 2]{DL}. In particular, the space
\begin{align*}
 H := \left\{ u \in L^2 (\Omega; \C^2) : \diver u = 0~\text{in}~\Omega, \left\langle u |_{\partial \Omega}, \nu \right\rangle = 0~\text{on}~\partial \Omega \right\}
\end{align*}
is well-defined. Moreover, $H$ is orthogonal in $L^2 (\Omega; \C^2)$ to the closed subspace $\nabla H^1 (\Omega)$, and the Helmholtz decomposition
\begin{align}\label{eq:Helmholtz}
 L^2 (\Omega; \C^2) = \nabla H^1 (\Omega) \oplus H
\end{align}
holds, see, e.g., \cite[Chapter XIX, \S 1, Theorem 4]{DL}. The space $H$ may be decomposed
\begin{align*}
 H = \nabla^\perp H_0^1 (\Omega) \oplus \left\{u \in H : \partial_2 u_1 = \partial_1 u_2~\text{in}~\Omega \right\},
\end{align*}
where $\nabla^\perp \phi = \binom{- \partial_2 \phi}{\partial_1 \phi}$. In particular, if $\Omega$ is simply connected then the Helmholtz decomposition \eqref{eq:Helmholtz} takes the form
\begin{align}\label{eq:goodDecomposition}
 L^2 (\Omega; \C^2) = \nabla H^1 (\Omega) \oplus \nabla^\perp H_0^1 (\Omega),
\end{align}
see, e.g., \cite[Lemma 2.10]{K10}.

\subsection{Sesquilinear forms and self-adjoint operators}\label{sec:Kato}

We review very briefly the relation between self-adjoint operators semi-bounded below and closed semi-bounded sesquilinear forms; for more details we refer the reader to \cite[Chapter VI]{Kato} or \cite[Chapters 10 and 12]{S12}.

Let $\cH$ be a Hilbert space with inner product $(\cdot, \cdot)$ and corresponding norm $\|\cdot\|$ and let $\sa$ be a sesquilinear form in $\cH$ with domain $\sa$; that is, $\dom \sa$ is a linear subspace of $\cH$ and $\sa : \dom \sa \times \dom \sa \to \C$ is linear in the first and anti-linear in the second entry. We say that $\sa$ is {\em symmetric} if
\begin{align*}
 \sa [u] := \sa [u, u] \in \R, \quad u \in \dom \sa,
\end{align*}
and {\em semi-bounded below} if, for some $\mu \in \R$,
\begin{align*}
 \sa [u] \geq \mu \|u\|^2, \quad u \in \dom \sa.
\end{align*}
If $\sa$ is a semi-bounded sesquilinear form then clearly
\begin{align*}
 \| u \|_\sa^2 := \sa [u] + (1 - \mu) \|u\|^2, \quad u \in \dom \sa,
\end{align*}
defines a norm on $\cH_\sa := \dom \sa$, and $\cH_\sa$ is a pre-Hilbert space, and $\sa$ is called {\em closed} if $\cH_\sa$ is a Hilbert space. The correspondence between closed semi-bounded sesquilinear forms and self-adjoint operators bounded below is summarized in the following proposition.

\begin{proposition}\label{prop:Kato}
Assume that $\sa$ is a symmetric, semi-bounded, closed sesquilinear form whose domain $\dom \sa$ is dense in $\cH$. Then the following assertions hold.
\begin{enumerate}
 \item There exists a unique self-adjoint, non-negative operator $A$ in $\cH$ with $\dom A \subset \dom \sa$ such that
\begin{align}\label{eq:repOp}
 (A u, v) = \sa [u, v], \quad u \in \dom A, v \in \dom \sa.
\end{align}
Moreover, $u \in \dom \sa$ belongs to $\dom A$ if and only if there exists $w \in \cH$ such that $\sa [u, v] = (w, v)$ holds for all $v \in \dom \sa$; in this case, $A u = w$.
 \item If, in addition, $\dom \sa$ is compactly embedded into $\cH$ then the spectrum of $A$ is purely discrete, i.e.\ it consists of isolated eigenvalues with finite multiplicities. Upon enumerating these eigenvalues non-decreasingly according to their multiplicities
\begin{align*}
 \eta_1 \leq \eta_2 \leq \dots,
\end{align*}
the min-max principle
\begin{align*}
 \eta_j = \min_{\substack{F \subset \cH_\sa~\text{subspace} \\ \dim F = j}} \;\; \max_{u \in F \setminus \{0\}} \frac{\sa [u]}{\|u\|^2}
\end{align*}
holds. In particular, the lowest eigenvalue $\eta_1$ of $A$ is given by
\begin{align}\label{eq:minPrinciple}
 \eta_1 = \min_{u \in \cH_\sa \setminus \{0\}} \frac{\sa [u]}{\|u\|^2}.
\end{align}
Moreover, $u$ is an eigenfunction of $A$ corresponding to $\eta_1$ if and only if $u$ minimizes \eqref{eq:minPrinciple}.
\end{enumerate}
\end{proposition}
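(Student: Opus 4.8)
The statement to be proven is Proposition~\ref{prop:Kato}, the classical correspondence between closed semi-bounded symmetric forms and self-adjoint operators (the first and second representation theorems of Kato), plus the min-max principle in the compact-embedding case.

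Let me sketch a proof plan.

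Key steps:

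(i) Existence and uniqueness of the representing operator. WLOG replace $\mathfrak{a}$ by $\mathfrak{a} + (1-\mu)$ so that $\mathfrak{a}[u] \geq \|u\|^2$, making $\|\cdot\|_{\mathfrak{a}}$ an equivalent-type norm with $\|u\|_{\mathfrak{a}}^2 = \mathfrak{a}[u]$. Then $\mathcal{H}_{\mathfrak{a}}$ is a Hilbert space densely embedded (continuously) in $\mathcal{H}$. For $w \in \mathcal{H}$, the map $v \mapsto (w,v)$ is a bounded conjugate-linear functional on $\mathcal{H}_{\mathfrak{a}}$ (bounded since $|(w,v)| \leq \|w\|\|v\| \leq \|w\|\|v\|_{\mathfrak{a}}$), so by Riesz representation there's a unique $Tw \in \mathcal{H}_{\mathfrak{a}}$ with $\mathfrak{a}[Tw, v] = (w,v)$ for all $v$. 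Show $T: \mathcal{H} \to \mathcal{H}$ is bounded, self-adjoint, injective, with range dense in $\mathcal{H}_{\mathfrak{a}}$ hence in $\mathcal{H}$; also $T \geq 0$. Then $A := T^{-1}$ (defined on $\dom A = \ran T$) minus the shift is the desired operator. Self-adjointness of $A$ follows since $T$ is bounded self-adjoint injective. The characterization of $\dom A$ is immediate from the construction.

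(ii) Compact embedding implies discrete spectrum. If $\mathcal{H}_{\mathfrak{a}} \hookrightarrow \mathcal{H}$ compactly, then $T$, being the composition of the bounded map $\mathcal{H} \to \mathcal{H}_{\mathfrak{a}}$ with the compact inclusion $\mathcal{H}_{\mathfrak{a}} \hookrightarrow \mathcal{H}$, is compact and self-adjoint; by the spectral theorem for compact self-adjoint operators its nonzero eigenvalues accumulate only at $0$, and $0$ is not an eigenvalue (injectivity). Inverting, $A$ has purely discrete spectrum. The min-max principle for $A$ then either follows by transcribing the classical min-max for the compact operator $T$, or is proven directly: for the Rayleigh quotient $R(u) = \mathfrak{a}[u]/\|u\|^2$ on $\mathcal{H}_{\mathfrak{a}}$, one shows the minimum over $j$-dimensional subspaces is attained, by a standard argument — take a minimizing sequence of subspaces / orthonormal systems, extract weakly convergent subsequences in $\mathcal{H}_{\mathfrak{a}}$, use compactness to pass to strong limits in $\mathcal{H}$, and weak lower semicontinuity of $\mathfrak{a}$ to conclude. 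The characterization of eigenfunctions at the bottom: if $u$ minimizes \eqref{eq:minPrinciple} with value $\eta_1$, then for any $v \in \mathcal{H}_{\mathfrak{a}}$ the function $t \mapsto R(u+tv)$ has a minimum at $t=0$, so differentiating gives $\mathfrak{a}[u,v] = \eta_1 (u,v)$ for all $v$; by part (i), $u \in \dom A$ and $Au = \eta_1 u$. Conversely eigenfunctions at $\eta_1$ obviously achieve the minimum.

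The main obstacle: the cleanest complete route does require the Lax–Milgram/Riesz construction of $T$ and careful verification that $A = T^{-1}$ is self-adjoint (not merely symmetric), plus handling the shift by $(1-\mu)$ consistently. Since this is standard material for which the paper explicitly refers to Kato and Schmüdgen, I would present the proof compactly, doing the Riesz-representation construction of $T$ in detail, noting compactness of $T$ under the embedding hypothesis, and then either citing or briefly sketching the min-max consequence.

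Here is the plan written out:

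\begin{proof}[Sketch of proof of Proposition~\ref{prop:Kato}]
The plan is to reduce everything to the spectral theory of a single bounded (and, under the extra hypothesis, compact) self-adjoint operator obtained from $\sa$ by Riesz representation. Replacing $\sa$ by $\sa + (1 - \mu)(\cdot,\cdot)$ and $A$ by $A - (1-\mu)$ at the end, we may and do assume $\sa[u] \geq \|u\|^2$, so that $\|u\|_\sa^2 = \sa[u]$ and the inclusion $\cH_\sa \hookrightarrow \cH$ is a bounded injection with dense range. For each $w \in \cH$ the map $v \mapsto (w,v)$ is anti-linear on $\cH_\sa$ and bounded, since $|(w,v)| \leq \|w\|\,\|v\| \leq \|w\|\,\|v\|_\sa$; by the Riesz representation theorem in the Hilbert space $\cH_\sa$ there is a unique $Tw \in \cH_\sa$ with $\sa[Tw,v] = (w,v)$ for all $v \in \cH_\sa$. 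One checks in the usual way that $T \colon \cH \to \cH$ is linear, bounded (with $\|T\| \leq 1$), self-adjoint (from symmetry of $\sa$), non-negative (since $(Tw,w) = \sa[Tw] \geq 0$), and injective (if $Tw = 0$ then $(w,v) = 0$ for all $v$ in the dense set $\cH_\sa$). Setting $\dom A := \ran T$ and $A := T^{-1}$, the operator $A$ is self-adjoint and non-negative because $T$ is bounded, self-adjoint, non-negative and injective; and $\dom A \subset \cH_\sa$ with \eqref{eq:repOp} holding by construction. If $u \in \cH_\sa$ and $w \in \cH$ satisfy $\sa[u,v] = (w,v)$ for all $v \in \cH_\sa$, then $\sa[u - Tw, v] = 0$ for all $v$, whence $u = Tw \in \dom A$ and $Au = w$; this proves the stated characterization of $\dom A$ and, together with uniqueness of $T$, the uniqueness of $A$.

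For part (ii), if $\cH_\sa$ embeds compactly into $\cH$ then $T$ factors as the bounded map $\cH \to \cH_\sa$, $w \mapsto Tw$, followed by the compact inclusion $\cH_\sa \hookrightarrow \cH$, hence $T$ is compact. By the spectral theorem for compact self-adjoint operators, $T$ has an orthonormal basis of eigenvectors with positive eigenvalues $\tau_1 \geq \tau_2 \geq \dots \to 0$ (positivity and the fact that $0$ is not an eigenvalue follow from non-negativity and injectivity of $T$). Consequently $A = T^{-1}$ has purely discrete spectrum with eigenvalues $\eta_j = \tau_j^{-1} \to \infty$ and the same eigenvectors. The min-max formula for $A$ is now obtained by transcribing the classical min-max (Courant–Fischer) characterization of $\tau_j = \max_{\dim F = j} \min_{0 \neq u \in F} (Tu,u)/\|u\|^2$: writing $u = A^{-1/2} h$ one has $(Tu,u)/\|u\|^2 = \|h\|^2/\sa[h]$ on $\cH_\sa$, and inverting the extremal problem yields $\eta_j = \min_{\dim F = j}\max_{0 \neq h \in F} \sa[h]/\|h\|^2$ over $j$-dimensional subspaces $F$ of $\cH_\sa$; in particular \eqref{eq:minPrinciple} holds. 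Finally, if $u \in \cH_\sa \setminus \{0\}$ minimizes the Rayleigh quotient $\sa[\cdot]/\|\cdot\|^2$ with minimal value $\eta_1$, then for every $v \in \cH_\sa$ the real function $t \mapsto \sa[u + tv]/\|u + tv\|^2$ has a minimum at $t = 0$; differentiating gives $\Real\big(\sa[u,v] - \eta_1 (u,v)\big) = 0$, and replacing $v$ by $\mathrm{i} v$ kills the imaginary part as well, so $\sa[u,v] = \eta_1 (u,v)$ for all $v \in \cH_\sa$. By part (i) this means $u \in \dom A$ and $A u = \eta_1 u$. The converse is immediate from \eqref{eq:repOp} and the min-max characterization. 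Undoing the shift completes the proof.
\end{proof}

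The step I expect to be the real content rather than bookkeeping is verifying that $A = T^{-1}$ is genuinely self-adjoint (the inverse of a bounded, injective, self-adjoint operator is self-adjoint on its range) and that the min-max transcription between $T$ and $A$ is carried out correctly; everything else is routine Hilbert-space manipulation, and since the paper cites \cite{Kato} and \cite{S12} one could alternatively keep this proof very short.
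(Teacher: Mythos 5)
The paper does not prove Proposition~\ref{prop:Kato}: the surrounding text explicitly refers the reader to Kato~\cite[Chapter~VI]{Kato} and Schm\"udgen~\cite[Chapters 10 and 12]{S12}, and the proposition is stated as a black box. So there is no ``paper's own proof'' to compare against, and you were entirely within your rights to keep this very short by citing those sources.

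That said, your sketch is essentially the standard Friedrichs/Lax--Milgram route and it is correct in all the main lines: shift so the form norm dominates the $\cH$-norm, apply Riesz representation in $\cH_\sa$ to get the bounded, self-adjoint, non-negative, injective operator $T$ with $\sa[Tw,v]=(w,v)$, set $A:=T^{-1}$, verify the domain characterization, observe that compact embedding makes $T$ compact so $A$ has discrete spectrum, and transfer Courant--Fischer from $T$ to $A$. One small slip: in the min--max transcription you write $u=A^{-1/2}h$, but the substitution that actually yields $(Tu,u)/\|u\|^2 = \|h\|^2/\sa[h]$ is $h = T^{1/2}u$, i.e.\ $u=A^{1/2}h$ (then $(Tu,u)=\|T^{1/2}u\|^2=\|h\|^2$ and $\|u\|^2=\|A^{1/2}h\|^2=\sa[h]$ for $h$ in the form domain, which is exactly $\dom A^{1/2}$). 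As written your expression does not reduce correctly; with the sign fixed the transcription is fine. One further cosmetic point: the proposition as stated in the paper calls $A$ ``non-negative,'' but for a form merely bounded below by $\mu<0$ the associated operator is only bounded below by $\mu$; after you undo the shift your construction gives exactly that, which is the correct statement, so the discrepancy is in the paper's wording rather than in your argument.
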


\section{A non-standard variational principle for Neumann and Dirichlet Laplacian eigenvalues}\label{sec:var}

In this section we introduce a self-adjoint Laplacian acting on vector fields, whose eigenvalues coincide with the union of the positive eigenvalues of the Neumann and Dirichlet Laplacians, including multiplicities. This leads to a new variational characterization of these eigenvalues.

First we construct the desired Laplacian using a corresponding sesquilinear form; this construction works for any domain with piecewise smooth boundary, not necessarily simply connected. For the notions of semi-bounded, closed sesquilinear forms and their relation to semi-bounded self-adjoint operators see the short survey in Section \ref{sec:Kato} above. In the following we use the abbreviation
\begin{align}\label{eq:HN}
 \cH_{\rm N} := \left\{ u \in H^1 (\Omega; \C^2) : \left\langle u |_{\partial \Omega}, \nu \right\rangle = 0~\text{on}~\partial \Omega \right\}.
\end{align}
Moreover, we denote by $\kappa$ the signed curvature function along the piecewise smooth curve $\partial \Omega$ w.r.t.\ the outer unit normal $\nu$, defined on all points of $\partial \Omega$ except possible corners; in particular, $\kappa$ is bounded and piecewise smooth, with possible jumps at the corners. If $\Omega$ is convex then $\kappa (x) \leq 0$ holds for almost all $x \in \partial \Omega$.

\begin{proposition}\label{prop:form}
Assume that $\Omega \subset \R^2$ is a bounded domain with piecewise $C^\infty$-smooth boundary. Then the sesquilinear form $\sa$ in $L^2 (\Omega; \C^2)$ defined by
\begin{align*}
 \sa \left[ u, v \right] & = \int_\Omega \big( \left\langle\nabla u_1, \nabla v_1\right\rangle + \left\langle\nabla u_2, \nabla v_2\right\rangle \big) - \int_{\partial \Omega} \kappa \langle u, v \rangle, \quad u = \binom{u_1}{u_2}, v =  \binom{v_1}{v_2},
\end{align*}
with domain $\dom \sa = \cH_{\rm N}$ is symmetric, semi-bounded and closed, and $\dom \sa$ is dense in $L^2 (\Omega; \C^2)$. The self-adjoint operator in $L^2 (\Omega; \C^2)$ associated with $\sa$ as in Proposition \ref{prop:Kato} has a purely discrete spectrum bounded below.
\end{proposition}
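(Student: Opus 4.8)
The plan is to check, in order, the four form-theoretic properties asserted and then to invoke Proposition \ref{prop:Kato}. Symmetry is immediate: the curvature $\kappa$ is real-valued and $\langle\nabla u_i,\nabla v_i\rangle$, $\langle u,v\rangle$ are Euclidean inner products, so $\sa[u,v]=\overline{\sa[v,u]}$ and $\sa[u]\in\R$ for all $u\in\cH_{\rm N}$. Density of $\dom\sa=\cH_{\rm N}$ in $L^2(\Omega;\C^2)$ follows since $C_c^\infty(\Omega;\C^2)\subset\cH_{\rm N}$ (compactly supported fields have vanishing trace, hence trivially satisfy $\langle u|_{\partial\Omega},\nu\rangle=0$) and $C_c^\infty(\Omega;\C^2)$ is dense in $L^2(\Omega;\C^2)$. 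The two substantial points are semi-boundedness and closedness, and both rest on one ingredient: a trace estimate with an arbitrarily small constant in front of the Dirichlet integral.

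Concretely, since the boundary is piecewise $C^\infty$-smooth, in particular Lipschitz, the trace operator $H^1(\Omega)\to L^2(\partial\Omega)$ is bounded and $H^1(\Omega)\hookrightarrow L^2(\Omega)$ is compact; an Ehrling-type interpolation argument then yields, for every $\eps>0$, a constant $C_\eps>0$ such that
\begin{align*}
 \|w\|_{L^2(\partial\Omega)}^2 \le \eps\,\|\nabla w\|_{L^2(\Omega;\C^2)}^2 + C_\eps\,\|w\|_{L^2(\Omega)}^2,\qquad w\in H^1(\Omega).
\end{align*}
Applying this to $u_1$ and $u_2$ and using that $\kappa$ is bounded on $\partial\Omega$ (its jumps at the finitely many corners being irrelevant, as only the $L^\infty$-bound enters), and abbreviating $\|\nabla u\|^2=\int_\Omega(|\nabla u_1|^2+|\nabla u_2|^2)$ and $\|u\|=\|u\|_{L^2(\Omega;\C^2)}$, one gets
\begin{align*}
 \Big|\int_{\partial\Omega}\kappa\,\langle u,u\rangle\Big| \le \|\kappa\|_{L^\infty(\partial\Omega)}\big(\eps\,\|\nabla u\|^2 + C_\eps\,\|u\|^2\big).
\end{align*}
Choosing $\eps$ with $\eps\,\|\kappa\|_{L^\infty(\partial\Omega)}\le\tfrac12$ then gives
\begin{align*}
 \sa[u]\ \ge\ \tfrac12\|\nabla u\|^2 - C_\eps\|\kappa\|_{L^\infty(\partial\Omega)}\,\|u\|^2\ \ge\ -C\|u\|^2,
\end{align*}
so $\sa$ is semi-bounded below, with admissible constant $\mu=-C$.

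For closedness I would show that the form norm $\|u\|_\sa^2=\sa[u]+(1-\mu)\|u\|^2$ is equivalent to the $H^1(\Omega;\C^2)$-norm on $\cH_{\rm N}$. The lower bound $\|u\|_\sa^2\gtrsim\|u\|_{H^1(\Omega;\C^2)}^2$ is exactly the last display (retaining $\tfrac12\|\nabla u\|^2$ together with a positive multiple of $\|u\|^2$), and the upper bound $\|u\|_\sa^2\lesssim\|u\|_{H^1(\Omega;\C^2)}^2$ follows from boundedness of the trace operator and of $\kappa$. Since $H^1(\Omega;\C^2)\ni u\mapsto\langle u|_{\partial\Omega},\nu\rangle$ is bounded (into $H^{-1/2}(\partial\Omega)$, or even into $L^2(\partial\Omega)$ given the regularity of $\nu$ away from corners), its kernel $\cH_{\rm N}$ is a closed subspace of the Hilbert space $H^1(\Omega;\C^2)$, hence complete, and by the norm equivalence it is complete in $\|\cdot\|_\sa$; thus $\sa$ is closed. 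Finally, $\cH_{\rm N}$ embeds continuously into $H^1(\Omega;\C^2)$, which embeds compactly into $L^2(\Omega;\C^2)$ by the Rellich--Kondrachov theorem, so $\dom\sa$ is compactly embedded in $L^2(\Omega;\C^2)$; Proposition \ref{prop:Kato} then applies and yields a self-adjoint operator associated with $\sa$ whose spectrum is purely discrete and bounded below by $\mu$.

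The only step requiring genuine care is the semi-boundedness: one must absorb the boundary integral of the sign-indefinite curvature term into the Dirichlet integral, which is precisely what the $\eps$-trace inequality accomplishes; the remaining items (symmetry, density, the norm equivalence giving closedness, and compactness of the embedding) are essentially bookkeeping. I would also note explicitly that the whole argument is insensitive to the behaviour of $\kappa$ at the corners, since only $\|\kappa\|_{L^\infty(\partial\Omega)}$ is used.
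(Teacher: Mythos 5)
Your proposal is correct and follows essentially the same route as the paper: symmetry and density are immediate, the Ehrling-type trace estimate with an arbitrarily small coefficient absorbs the curvature term to give semi-boundedness, the resulting form norm is equivalent to the $H^1(\Omega;\C^2)$-norm on the closed subspace $\cH_{\rm N}$, and the compact embedding of $H^1$ into $L^2$ yields discreteness of the spectrum. The only cosmetic difference is that you bound the boundary term via $\|\kappa\|_{L^\infty(\partial\Omega)}$ whereas the paper uses the one-sided bound $\kappa\le\max_{\partial\Omega}\kappa$; both choices absorb the curvature term into the Dirichlet integral in the same way.
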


\begin{proof}
First of all, 
\begin{align*}
 \sa \left[u \right] = \int_\Omega \left( |\nabla u_1|^2 + |\nabla u_2|^2 \right) - \int_{\partial \Omega} \kappa \big( |u_1|^2 + |u_2|^2 \big), \quad u = \binom{u_1}{u_2} \in \dom \sa,
\end{align*}
is real, i.e.\ $\sa$ is symmetric. Moreover, $\dom \sa$ is dense in $L^2 (\Omega; \C^2)$ as $C_0^\infty (\Omega; \C^2) \subset \dom \sa$. To see that $\sa$ is semi-bounded below, let us assume that 
\begin{align*}
 \kappa_0 := - \max_{\partial \Omega} \kappa < 0;
\end{align*}
otherwise one sees immediately that $\sa$ is non-negative. Recall that the boundedness of the trace map from $H^1 (\Omega)$ into $L^2 (\partial \Omega)$ may be sharpened in the following way, see, e.g., \cite[Lemma 4.2]{GM09}: for each $\eps > 0$ there exists some $C_\eps > 0$ such that
\begin{align}\label{eq:Ehrling}
 \int_{\partial \Omega} \big|\phi |_{\partial \Omega} \big|^2 \leq \eps \int_\Omega |\nabla \phi|^2 + C_\eps \int_\Omega |\phi|^2, \quad \phi \in H^1 (\Omega).
\end{align}
For $u \in \cH_{\rm N}$ we have
\begin{align*}
 \sa [u] & \geq \int_\Omega \left( |\nabla u_1|^2 + |\nabla u_2|^2 \right) + \kappa_0 \int_{\partial \Omega} \big( |u_1|^2 + |u_2|^2 \big),
\end{align*}
and if we choose $\eps > 0$ such that $1 + \eps \kappa_0 \geq 0$ it follows by applying \eqref{eq:Ehrling} to each component of $u$ that
\begin{align}\label{eq:first}
 \sa [u] & \geq (1 + \eps \kappa_0) \int_\Omega \left( |\nabla u_1|^2 + |\nabla u_2|^2 \right) + C_\eps \kappa_0 \int_{\Omega} \big( |u_1|^2 + |u_2|^2 \big).
\end{align}
Thus $\sa$ is semi-bounded below by some constant $\mu := C_\eps \kappa_0$. Moreover, it follows from \eqref{eq:first} and the boundedness of the trace operator that 
\begin{align*}
 \| u \|_\sa^2 = \sa [u] + (1 - \mu) \int_\Omega \big( |u_1|^2 + |u_2|^2 \big), \quad u \in \dom \sa,
\end{align*}
defines a norm which is equivalent to the norm of $H^1 (\Omega; \C^2)$ on $\dom \sa = \cH_{\rm N}$. Again by the continuity of the trace operator $H^1 (\Omega) \to H^{1/2} (\partial \Omega)$, $\cH_{\rm N}$ is a closed subspace of $H^1 (\Omega; \C^2)$ and, hence, $\sa$ is closed. Therefore there exists a unique self-adjoint operator $A$ in $L^2 (\Omega; \C^2)$ associated with $\sa$ as in \eqref{eq:repOp} with $\dom A \subset \dom \sa$. Finally, $\dom a \subset H^1 (\Omega; \C^2)$ is compactly embedded into $L^2 (\Omega; \C^2)$, which yields that $A$ has purely discrete spectrum bounded below by $\mu$. 
\end{proof}

We point out that the sesquilinear form $\sa$ in the previous proposition is obviously non-negative if $\Omega$ is convex (in which case $\kappa$ is non-positive). It is a consequence of Theorem \ref{thm:translateEV} below that this remains true if $\Omega$ is simply connected and sufficiently regular.

Our next aim is to establish a relation between the spectrum of the self-adjoint operator associated with $\sa$ and the spectra of the Dirichlet and Neumann Laplacians on $\Omega$. This will in particular lead to a variational principle for the union of their eigenvalues. If $\Omega$ is a Lipschitz domain with a piecewise $C^\infty$-smooth boundary and each corner is convex then
\begin{align}\label{eq:regularity}
 \dom (- \Delta_{\rm \bullet}) \subset H^2 (\Omega) \quad \text{for}~\bullet = \rm D, N,
\end{align}
holds for the domains of the Dirichlet and Neumann Laplacians, see, e.g., \cite[Section 8]{GM11}. For the following theorem we remind the reader that $\nabla^\perp = \binom{- \partial_2}{\partial_1}$.

\begin{theorem}\label{thm:translateEV}
Assume that $\Omega \subset \R^2$ is a bounded Lipschitz domain with piecewise $C^\infty$-smooth boundary whose corners are convex. Let $A$ be the self-adjoint operator in $L^2 (\Omega; \C^2)$ associated with the sesquilinear form $\sa$ in Proposition \ref{prop:form}. Moreover, let the eigenvalues of $- \Delta_{\rm N}$ be enumerated as in \eqref{eq:EVN} and let $\psi_1, \psi_2, \dots$ form an orthonormal basis of $L^2 (\Omega)$ such that $- \Delta_{\rm N} \psi_k = \mu_k \psi_k$ holds for $k = 1, 2, \dots$; analogously let the eigenvalues of $- \Delta_{\rm D}$ be enumerated as in \eqref{eq:EVD} and let $\phi_1, \phi_2, \dots$ be an orthonormal basis of $L^2 (\Omega)$ consisting of corresponding eigenfunctions, $- \Delta_{\rm D} \phi_k = \lambda_k \phi_k$ for all $k$. Then the following hold.
\begin{enumerate}
 \item For each $k \geq 2$, $\nabla \psi_k$ is nontrivial, belongs to $\dom A$, and satisfies $A \nabla \psi_k = \mu_k \nabla \psi_k$. Moreover, the functions $\frac{1}{\sqrt{\mu_k}} \nabla \psi_k$ form an orthonormal basis of $\nabla H^1 (\Omega)$.
 \item For each $k \geq 1$, $\nabla^\perp \phi_k$ is nontrivial, belongs to $\dom A$, and satisfies $A \nabla^\perp \phi_k = \lambda_k \nabla^\perp \phi_k$. Moreover, the functions $\frac{1}{\sqrt{\lambda_k}} \nabla^\perp \phi_k$ form an orthonormal basis of $\nabla^\perp H_0^1 (\Omega)$.
\end{enumerate}
In particular, if $\Omega$ is simply connected then the spectrum of $A$ coincides with the union of the positive eigenvalues of the Neumann and Dirichlet Laplacians, counted with multiplicities.
\end{theorem}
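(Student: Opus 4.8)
\textbf{Proof plan for Theorem \ref{thm:translateEV}.}
The plan is to verify parts (i) and (ii) by direct computation using Green's identities, and then to combine them with the Helmholtz decomposition \eqref{eq:goodDecomposition} and the characterization of $\dom A$ from Proposition \ref{prop:Kato}(i).

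For part (i), fix $k \geq 2$ and set $\psi = \psi_k$. By the regularity \eqref{eq:regularity} we have $\psi \in H^2(\Omega)$, so $\nabla \psi \in H^1(\Omega;\C^2)$; since $\partial_\nu \psi|_{\partial\Omega} = \langle \nabla\psi|_{\partial\Omega}, \nu\rangle = 0$ a.e.\ on $\partial\Omega$, it follows that $\nabla\psi \in \cH_{\rm N} = \dom\sa$. Nontriviality is clear because $\psi$ is nonconstant for $k \geq 2$ and $\Omega$ is connected. The main computation is to show that $\sa[\nabla\psi, v] = \mu_k (\nabla\psi, v)_{L^2(\Omega;\C^2)}$ for all $v \in \cH_{\rm N}$; then Proposition \ref{prop:Kato}(i) gives $\nabla\psi \in \dom A$ and $A\nabla\psi = \mu_k \nabla\psi$. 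To carry out this computation I would integrate by parts in each term $\int_\Omega \langle \nabla(\partial_j\psi), \nabla v_j\rangle$, using Green's first identity \eqref{eq:Green} applied to $\partial_j\psi$ (which lies in $H^1(\Omega)$ with $\Delta(\partial_j\psi) = \partial_j(\Delta\psi) = -\mu_k \partial_j\psi \in L^2(\Omega)$), producing a volume term $\mu_k \int_\Omega \partial_j\psi\, \overline{v_j}$ and a boundary term $(\partial_\nu(\partial_j\psi)|_{\partial\Omega}, v_j|_{\partial\Omega})_{\partial\Omega}$. Summing over $j$ and using the Neumann condition $\langle\nabla\psi,\nu\rangle = 0$ on $\partial\Omega$ together with the standard identity expressing the tangential variation of $\nabla\psi$ along $\partial\Omega$ via the curvature $\kappa$, the boundary contributions should collapse precisely to $-\int_{\partial\Omega}\kappa\langle\nabla\psi, v\rangle$, matching the curvature term in $\sa$. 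This boundary identity—differentiating the relation $\langle\nabla\psi,\nu\rangle=0$ along the boundary curve and invoking the Frenet formula $\partial_s\nu = \kappa\tau$—is the technical heart of the argument and the step I expect to be the main obstacle, since it requires care about the piecewise-smooth structure and the a.e.\ nature of $\nu$ on a Lipschitz boundary.

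For part (ii), fix $k \geq 1$ and set $\phi = \phi_k$. Again $\phi \in H^2(\Omega)$ by \eqref{eq:regularity}, hence $\nabla^\perp\phi \in H^1(\Omega;\C^2)$, and $\nabla^\perp\phi$ is nontrivial since $\phi$ is nonconstant (indeed $\phi|_{\partial\Omega}=0$ and $\phi\not\equiv0$). The boundary condition $\langle\nabla^\perp\phi|_{\partial\Omega},\nu\rangle = 0$ holds because $\nabla^\perp\phi$ is tangential wherever $\phi$ vanishes on $\partial\Omega$: the Dirichlet condition forces $\nabla\phi|_{\partial\Omega}$ to be normal, so $\nabla^\perp\phi|_{\partial\Omega}$ is tangential. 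Thus $\nabla^\perp\phi \in \cH_{\rm N}$. The computation of $\sa[\nabla^\perp\phi, v]$ runs parallel to part (i): integrate by parts in each term using $\Delta(\partial_j\phi) = -\lambda_k\partial_j\phi$, obtain the volume term $\lambda_k(\nabla^\perp\phi, v)$, and show the boundary terms reduce to $-\int_{\partial\Omega}\kappa\langle\nabla^\perp\phi, v\rangle$ — here one differentiates the relation $\phi|_{\partial\Omega}=0$ along $\partial\Omega$, which shows the tangential derivative of $\phi$ vanishes, and combines this with the Frenet formulas to control the tangential variation of $\nabla^\perp\phi$. This again yields $\nabla^\perp\phi \in \dom A$ and $A\nabla^\perp\phi = \lambda_k\nabla^\perp\phi$.

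It remains to upgrade these eigenrelations to the orthonormal-basis statements and to the final spectral identity. Orthonormality of $\{\mu_k^{-1/2}\nabla\psi_k\}_{k\geq2}$ follows from $(\nabla\psi_k,\nabla\psi_\ell)_{L^2} = (-\Delta_{\rm N}\psi_k,\psi_\ell)_{L^2} = \mu_k\delta_{k\ell}$ (Green's identity plus the Neumann condition), and similarly $(\nabla^\perp\phi_k,\nabla^\perp\phi_\ell)_{L^2} = (\nabla\phi_k,\nabla\phi_\ell)_{L^2} = \lambda_k\delta_{k\ell}$; the two families are mutually orthogonal because they lie in the orthogonal summands $\nabla H^1(\Omega)$ and $\nabla^\perp H^1_0(\Omega)$ respectively. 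Completeness of $\{\mu_k^{-1/2}\nabla\psi_k\}$ in $\nabla H^1(\Omega)$ follows since $\{\psi_k\}_{k\geq1}$ is complete in $L^2(\Omega)$ and $\psi_1$ is constant (so $\nabla\psi_1=0$), hence $\{\psi_k\}_{k\geq2}$ spans a dense subspace of $\{1\}^\perp$ and taking gradients — a bounded surjection onto $\nabla H^1(\Omega)$ with kernel the constants — preserves density; similarly $\{\phi_k\}$ complete in $L^2(\Omega)$ gives completeness of $\{\lambda_k^{-1/2}\nabla^\perp\phi_k\}$ in $\nabla^\perp H^1_0(\Omega)$, using that $\phi\mapsto\nabla^\perp\phi$ maps $H^1_0(\Omega)$ boundedly onto $\nabla^\perp H^1_0(\Omega)$ with trivial kernel. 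Finally, when $\Omega$ is simply connected, \eqref{eq:goodDecomposition} gives $L^2(\Omega;\C^2) = \nabla H^1(\Omega)\oplus\nabla^\perp H^1_0(\Omega)$, so the union of the two families is an orthonormal basis of $L^2(\Omega;\C^2)$ consisting of eigenvectors of $A$ with eigenvalues $\{\mu_k\}_{k\geq2}\cup\{\lambda_k\}_{k\geq1}$ — which are exactly the positive eigenvalues of $-\Delta_{\rm N}$ and $-\Delta_{\rm D}$ with multiplicities — and since $A$ has purely discrete spectrum (Proposition \ref{prop:form}) this accounts for all of $\sigma(A)$.
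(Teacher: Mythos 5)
Your plan follows essentially the same route as the paper's proof: regularity puts $\nabla\psi_k$ and $\nabla^\perp\phi_k$ in $\cH_{\rm N}$, Green's identity plus the boundary curvature computation yields the eigenrelation for $A$, and the basis and spectral claims follow from completeness of the eigenfunction families together with the Helmholtz decomposition \eqref{eq:goodDecomposition}. The boundary identity you flag as the main obstacle is exactly the content of the paper's Lemma \ref{lem}, which is proved once under the hypothesis $\partial_1 u_2 - \partial_2 u_1 = 0$ on $\partial\Omega$ (satisfied by $\nabla\psi$ since the curl vanishes identically, and by $\nabla^\perp\phi$ since the curl equals $-\Delta\phi = \lambda\phi$, which vanishes on $\partial\Omega$) and then applied to both parts, with a capacity argument used to extend from test fields whose boundary traces avoid the corners to all of $\cH_{\rm N}$.
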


The proof of this theorem will be based on a computation summarized in the following lemma.

\begin{lemma}\label{lem}
Assume that $\Omega$ is a bounded Lipschitz domain with piecewise $C^\infty$-smooth boundary and that $u = (u_1, u_2)^\top \in \cH_{\rm N}$ has the following properties:
\begin{enumerate}
 \item[(a)] $u \in C^\infty (\Omega; \C^2)$ and for each open set $U \subset \Omega$ whose closure does not contain any corners of $\partial \Omega$, $u \in C^\infty (\overline{U}; \C^2)$;
 \item[(b)] $\partial_1 u_2 - \partial_2 u_1 = 0$ almost everywhere on $\partial \Omega$;
 \item[(c)] $\Delta u_j \in L^2 (\Omega)$, $j = 1, 2$.
\end{enumerate}
Then for each $v = (v_1, v_2)^\top \in \cH_{\rm N}$ the identity
\begin{align*}
 \sa [u, v] = - \int_\Omega \big( \Delta u_1 \overline{v_1} + \Delta u_2 \overline{v_2} \big)
\end{align*}
holds.
\end{lemma}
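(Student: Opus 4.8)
The plan is to prove the identity by integration by parts, carefully tracking the boundary terms. The key algebraic observation is that the Dirichlet-type integrand $\langle \nabla u_1, \nabla v_1\rangle + \langle \nabla u_2, \nabla v_2\rangle$ can be rewritten using the quantity $\partial_1 u_2 - \partial_2 u_1$, which vanishes on $\partial\Omega$ by hypothesis (b), in such a way that the resulting boundary term matches the curvature term $\int_{\partial\Omega}\kappa\langle u,v\rangle$.

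First I would reduce to the smooth case. By hypothesis (a), $u$ is smooth up to the boundary away from corners, and since there are only finitely many corners, a standard capacity/cutoff argument lets us ignore them: choose cutoff functions that are $1$ away from small neighborhoods of the corners and whose $H^1$-norms of the complements tend to zero, so that all integrals over $\partial\Omega$ and $\Omega$ converge to the claimed ones. Hypothesis (c) guarantees $\Delta u_j\in L^2(\Omega)$ so the right-hand side makes sense, and Green's first identity \eqref{eq:Green} is available componentwise: for $j=1,2$,
\begin{align*}
 \int_\Omega \langle\nabla u_j,\nabla v_j\rangle = -\int_\Omega \Delta u_j\,\overline{v_j} + \big(\partial_\nu u_j|_{\partial\Omega}, v_j|_{\partial\Omega}\big)_{\partial\Omega}.
\end{align*}
Summing over $j$, the proof reduces to showing that the total boundary contribution $\big(\partial_\nu u_1|_{\partial\Omega},v_1\big)_{\partial\Omega} + \big(\partial_\nu u_2|_{\partial\Omega},v_2\big)_{\partial\Omega}$ equals $\int_{\partial\Omega}\kappa\langle u,v\rangle$.

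The heart of the matter is this boundary identity, and it is where I expect the main work to lie. Write $\nu=(\nu_1,\nu_2)^\top$ and $\tau=(-\nu_2,\nu_1)^\top$ for the unit tangent. On $\partial\Omega$ we have $\partial_\nu u_j = \nu_1\partial_1 u_j + \nu_2\partial_2 u_j$. The plan is to decompose $\nabla u_j$ into its normal and tangential parts and then use the two structural facts about $u$: the constraint $\langle u,\nu\rangle=0$ from $\cH_{\rm N}$ (so $u=\langle u,\tau\rangle\tau$ along the boundary), and hypothesis (b), $\partial_1 u_2 = \partial_2 u_1$ on $\partial\Omega$. Concretely, one computes $\nu_1\partial_\nu u_1 + \nu_2\partial_\nu u_2$ and $\partial_1 u_1 + \partial_2 u_2 = \diver u$, and expresses the sum $\partial_\nu u_1\,\overline{v_1} + \partial_\nu u_2\,\overline{v_2}$ in terms of $\langle u,\tau\rangle$, its tangential derivative, and $\langle v,\tau\rangle$ (using that only the tangential part of $v$ survives pairing against the boundary data because of a further manipulation, or by testing). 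Since $v\in\cH_{\rm N}$ also satisfies $\langle v,\nu\rangle=0$, only the tangential components enter. The differentiated Frenet relation $\partial_\tau\nu = \kappa\tau$ (equivalently $\partial_\tau\tau = -\kappa\nu$) is what produces the curvature: differentiating the identity $\langle u,\nu\rangle=0$ along $\partial\Omega$ gives $\langle\partial_\tau u,\nu\rangle = -\langle u,\partial_\tau\nu\rangle = -\kappa\langle u,\tau\rangle$, and this, combined with hypothesis (b) which says the antisymmetric part of $\nabla u$ vanishes on the boundary, pins down $\partial_\nu u_1,\partial_\nu u_2$ in terms of $\kappa$ and $\langle u,\tau\rangle$; pairing against $v$ and using $\langle v,\nu\rangle=0$ yields exactly $\int_{\partial\Omega}\kappa\langle u,v\rangle$.

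The main obstacle is the boundary computation: one must correctly identify which combination of the four first derivatives $\partial_i u_j$ appears in $\partial_\nu u_1\,\overline{v_1}+\partial_\nu u_2\,\overline{v_2}$ after imposing $\langle u,\nu\rangle=0$ on the boundary, and then see that hypothesis (b) kills precisely the unwanted skew part, leaving the symmetric part whose trace against the normal encodes $\kappa$ via the Frenet equations. A clean way to organize this is to fix a point of $\partial\Omega$, rotate coordinates so that $\nu=\mathbf e_2$ and $\tau=\mathbf e_1$ there, write $u=(u_1,0)$ locally on the boundary, expand $\partial_\nu u_1 = \partial_2 u_1$ and $\partial_\nu u_2 = \partial_2 u_2$, use $\partial_2 u_1 = \partial_1 u_2$ from (b), and relate $\partial_1 u_2$ to $\partial_\tau(\langle u,\nu\rangle) - \langle u,\partial_\tau\nu\rangle$-type expressions — the curvature $\kappa$ at that point emerges from $\partial_\tau\nu=\kappa\tau$. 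The rest is bookkeeping and the capacity argument near corners, both routine.
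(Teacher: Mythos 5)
Your overall strategy is the same as the paper's: apply Green's identity \eqref{eq:Green} componentwise to reduce the claim to a boundary identity, then use the tangential decomposition, hypothesis (b), the constraint $\langle u,\nu\rangle=0$ from $\cH_{\rm N}$, and the Frenet relation to recognize the boundary contribution as $\int_{\partial\Omega}\kappa\langle u,v\rangle$, with a capacity/density argument to neutralize the corners. The paper works with an explicit arc-length parametrization $r(s)$ of each smooth arc rather than a pointwise-rotated frame, but this is cosmetic.

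There is, however, a sign error that, as written, would make the final boundary term come out with the wrong sign. You cite the Frenet relation as $\partial_\tau\nu = \kappa\tau$, but with the paper's convention --- $\kappa$ is taken with respect to the \emph{outer} unit normal and is negative where $\Omega$ is convex; explicitly, $\tau=r'$, $\nu=-r'^\perp$, $\kappa=\langle\nu,r''\rangle$ as in \eqref{eq:curveProperties} --- one has $\partial_\tau\nu = -\kappa\tau$ (and $\partial_\tau\tau = \kappa\nu$). A quick check on the unit disk, where $\kappa=-1$ and $\partial_\tau\nu = \tau$, confirms this. Consequently, differentiating $\langle u,\nu\rangle = 0$ along the boundary gives
\begin{align*}
\langle\partial_\tau u,\nu\rangle = -\langle u,\partial_\tau\nu\rangle = +\kappa\langle u,\tau\rangle,
\end{align*}
with a plus sign, not the $-\kappa\langle u,\tau\rangle$ you wrote. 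In your rotated local frame (with $\nu=\mathbf e_2$, $\tau=\mathbf e_1$, and $u_2=v_2=0$ at the point), the chain
\begin{align*}
\partial_\nu u_1\,\overline{v_1} + \partial_\nu u_2\,\overline{v_2}
= (\partial_2 u_1)\overline{v_1}
= (\partial_1 u_2)\overline{v_1}
= \langle\partial_\tau u,\nu\rangle\,\overline{v_1}
= \kappa\langle u,\tau\rangle\,\overline{v_1}
= \kappa\langle u,v\rangle
\end{align*}
then gives exactly what is required, so that after Green's identity the two curvature terms cancel and the lemma follows. With your sign, the boundary term would be $-\int_{\partial\Omega}\kappa\langle u,v\rangle$, which does not cancel but doubles the curvature contribution. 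Fix the sign in the Frenet relation and the argument is correct and matches the paper's.
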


\begin{proof}
As $u_j \in H^1 (\Omega)$ with $\Delta u_j \in L^2 (\Omega)$, $j = 1, 2$, we may apply Green's identity \eqref{eq:Green} to see that for each $v \in \cH_{\rm N}$ we have 
\begin{align}\label{eq:partialInt}
\begin{split}
 \sa [u, v] + \int_\Omega \big( \Delta u_1 \overline{v_1} + \Delta u_2 \overline{v_2} \big) & = (\partial_\nu u_1 |_{\partial \Omega}, v_1 |_{\partial \Omega})_{\partial \Omega} + (\partial_\nu u_2  |_{\partial \Omega}, v_2  |_{\partial \Omega})_{\partial \Omega} \\
 & \quad - \int_{\partial \Omega} \kappa \langle u, v \rangle.
\end{split}
\end{align}
Our aim is therefore to show that the right-hand side of the latter equality vanishes. Let $\Gamma_l$ be a smooth arc in $\partial \Omega$ and let $v \in C^\infty (\overline \Omega; \C^2)$ be such that its trace $v |_{\partial \Omega}$ is supported compactly in the interior of $\Gamma_l$. (Note that for the moment we do not require $v \in \cH_{\rm N}$.) Due to the property (a) on $u$ we may then rewrite the dualities on the right-hand side of \eqref{eq:partialInt} as a boundary integral,
\begin{align*}
 (\partial_\nu u_1  |_{\partial \Omega}, v_1 |_{\partial \Omega})_{\partial \Omega} + (\partial_\nu u_2 |_{\partial \Omega}, v_2 |_{\partial \Omega})_{\partial \Omega} & = \int_{\Gamma_l} \left\langle \binom{\langle \nu, \nabla u_1 \rangle}{\langle \nu, \nabla u_2 \rangle}, v \right\rangle,
\end{align*}
and the latter integral may be expanded, using the unit tangential vector field $\tau$ on $\partial \Omega$,
\begin{align}\label{eq:abc}
\begin{split}
 \int_{\Gamma_l} \left\langle \binom{\langle \nu, \nabla u_1 \rangle}{\langle \nu, \nabla u_2 \rangle}, v \right\rangle & = \int_{\Gamma_l} \left\langle \binom{\langle \partial_1 u, v \rangle}{\langle \partial_2 u, v \rangle}, \nu \right\rangle \\
 & = \int_{\Gamma_l} \langle \tau, v \rangle \left\langle \binom{\langle \partial_1 u, \tau \rangle}{\langle \partial_2 u, \tau \rangle}, \nu \right\rangle + \int_{\Gamma_l} \langle \nu, v \rangle \left\langle \binom{\langle \partial_1 u, \nu \rangle}{\langle \partial_2 u, \nu \rangle}, \nu \right\rangle.
\end{split}
\end{align}
Let now $r : [0, L] \to \R^2$ be a $C^\infty$-smooth arc length parametrization of $\Gamma_l$, taken in the direction such that $\Omega$ is on the left. Then at each point $x = r (s)$ with $s \in (0, L)$,
\begin{align}\label{eq:curveProperties}
 \tau (x) = r' (s), \quad \nu (x) = - r' (s)^\perp, \quad \text{and} \quad \kappa (x) = \langle \nu (x), r'' (s) \rangle.
\end{align}
Then for $x = r (s)$, $s \in (0, L)$, we have 
\begin{align*}
 \frac{\dd}{\dd s} (u_j (r (s))) = \langle (\nabla u_j) (r (s)), r' (s)\rangle = \partial_1 u_j (x) \tau_1 (x) + \partial_2 u_j (x) \tau_2 (x),
\end{align*}
and therefore a straightforward computation gives
\begin{align*}
 \left\langle \binom{\langle \partial_1 u (x), \tau (x) \rangle}{\langle \partial_2 u (x), \tau (x) \rangle}, \nu (x) \right\rangle & = \left\langle \frac{\dd u (r (s))}{\dd s}, \nu (x) \right\rangle + \partial_1 u_2 (x) - \partial_2 u_1 (x).
\end{align*}
Proceeding with this and using the assumption (b) on $u$ as well as $\langle u, \nu \rangle = 0$ constantly on $\partial \Omega$, we get
\begin{align*}
 \left\langle \binom{\langle \partial_1 u (x), \tau (x) \rangle}{\langle \partial_2 u (x), \tau (x) \rangle}, \nu (x) \right\rangle & = \frac{\dd}{\dd s} \big\langle u (r (s)), \nu (r (s)) \big\rangle - \left\langle u (x), - r'' (s)^\perp \right\rangle \\
 & = \langle u (x), \tau (x) \rangle \left\langle \tau (x), r'' (s)^\perp \right\rangle = \kappa (x) \left\langle u (x), \tau (x) \right\rangle,
\end{align*}
where we have used \eqref{eq:curveProperties}. Therefore \eqref{eq:abc} yields
\begin{align}\label{eq:result}
 \int_{\Gamma_l} \left\langle \binom{\langle \nu, \nabla u_1 \rangle}{\langle \nu, \nabla u_2 \rangle}, v \right\rangle & = \int_{\Gamma_l} \kappa \langle \tau, v \rangle \langle u, \tau \rangle + \int_{\Gamma_l} \langle \nu, v \rangle \left\langle \binom{\langle \partial_1 u, \nu \rangle}{\langle \partial_2 u, \nu \rangle}, \nu \right\rangle.
\end{align}
This is true for each smooth arc $\Gamma_l$ in $\partial \Omega$ and each $v$ that is smooth up to the boundary and whose trace has compact support in the interior of $\Gamma_l$. 

Let now $v \in \cH_{\rm N}$ be arbitrary. By a capacity argument the functions in $C_0^\infty (\R^2)$ whose supports do not contain the corners of $\Omega$ are dense in $H^1 (\R^2)$, see \cite[Chapter VIII, Corollary 6.4]{EE87}. Hence it follows from \eqref{eq:result} by approximation and the continuity of the trace operator that
\begin{align*}
 (\partial_\nu u_1  |_{\partial \Omega}, v_1 |_{\partial \Omega})_{\partial \Omega} + (\partial_\nu u_2 |_{\partial \Omega}, v_2 |_{\partial \Omega})_{\partial \Omega} & = \int_{\partial \Omega} \kappa \langle \tau, v \rangle \langle u, \tau \rangle = \int_{\partial \Omega} \kappa \langle u, v \rangle.
\end{align*}
From this and \eqref{eq:partialInt} the assertion of the lemma follows.
\end{proof}

\begin{proof}[Proof of Theorem \ref{thm:translateEV}](i) Let first $\psi$ be an eigenfunction of $- \Delta_{\rm N}$ corresponding to a nonzero eigenvalue $\mu$ and let 
\begin{align*}
 u := \binom{u_1}{u_2} := \nabla \psi.
\end{align*}
Then $u$ satisfies all conditions of Lemma \ref{lem}. Indeed, \eqref{eq:regularity} gives $u \in H^1 (\Omega; \C^2)$, and the Neumann boundary condition imposed on $\psi$ reads $\langle u, \nu \rangle = 0$ almost everywhere on $\partial \Omega$, i.e.\ $u \in \cH_{\rm N}$. Moreover, applying the Laplacian componentwise in the distributional sense yields $\Delta u_j = \partial_j \Delta \psi = - \mu u_j \in L^2 (\Omega)$, $j = 1, 2$, so that condition (c) of the lemma is satisfied. By classical elliptic regularity theory, $\psi$ belongs to $C^\infty (\Omega)$ and is $C^\infty$-smooth up to the boundary, except possibly near the corners, see, e.g., \cite[Theorem 4.18 (ii)]{McL}. This yields condition (a) of the lemma. Moreover, $\partial_1 u_2 - \partial_2 u_1 = \partial_1 \partial_2 u - \partial_2 \partial_1 u = 0$ in $\Omega$, which implies that condition (b) holds. Hence
\begin{align*}
 \sa \left[ u, v \right] = - \int_\Omega \big( \Delta u_1 \overline{v_1} + \Delta u_2 \overline{v_2} \big) = \mu \int_\Omega \langle u, v \rangle
\end{align*}
holds for all $v \in \dom \sa$, which implies $u \in \dom A$ and $A u = \mu u$. Thus, as $\psi$ is not constant, $\mu$ is an eigenvalue of $A$ with corresponding eigenfunction $u$. 

Take the eigenfunctions $\psi_k, \psi_i$ of $- \Delta_{\rm N}$, where $k, i \neq 1$. Then $\mu_i \neq 0$ and
\begin{align*}
 \int_\Omega \langle \nabla \psi_k, \nabla \psi_i \rangle & = - \int_\Omega \psi_k \Delta \psi_i = \mu_i \int_\Omega \psi_k \psi_i,
\end{align*}
which equals $0$ if $k \neq i$ and $\mu_k$ if $k = i$. Thus the functions $\frac{1}{\sqrt{\mu_k}} \nabla \psi_k$ form an orthonormal system in $\nabla H^1 (\Omega)$. Let us assume there exists $u = \nabla \phi \in \nabla H^1 (\Omega)$ which is orthogonal to all vector fields $\nabla \psi_k$. Then
\begin{align*}
 0 = \int_\Omega \left\langle \nabla \phi, \nabla \psi_k \right\rangle & = \mu_k \int_\Omega \phi \psi_k,
\end{align*}
and, hence, $\phi$ is orthogonal to all non-constant eigenfunctions of $- \Delta_{\rm N}$. Consequently, $\phi$ is constant, i.e.\ $\nabla \phi = 0$. Therefore indeed the functions $\frac{1}{\sqrt{\mu_k}} \nabla \psi_k$ form an orthonormal basis of $\nabla H^1 (\Omega)$.

(ii) Let now $\phi \in \dom (- \Delta_{\rm D})$ be a nontrivial function such that $- \Delta_{\rm D} \phi = \lambda \phi$, and let
\begin{align*}
 u := \binom{u_1}{u_2} := \nabla^\perp \phi.
\end{align*}
Due to \eqref{eq:regularity}, $u \in H^1 (\Omega; \C^2)$. Furthermore, $u$ is constantly zero along $\partial \Omega$, which gives $\langle u, \nu \rangle = - \langle \nabla \phi, \tau \rangle = 0$ on $\partial \Omega$. Thus $u \in \cH_{\rm N}$. Moreover, $- \Delta u_j = \lambda u_j \in L^2 (\Omega)$ holds for $j = 1, 2$, and $\partial_2 u_1 - \partial_1 u_2 = - \Delta \phi = \lambda \phi$ vanishes on $\partial \Omega$. Since, again by elliptic regularity, $u$ is smooth up to the boundary, possibly except near corners, we may again apply Lemma \ref{lem} and obtain
\begin{align*}
 \sa \left[ u, v \right] = - \int_\Omega \big( \Delta u_1 \overline{v_1} + \Delta u_2 \overline{v_2} \big) = \lambda \int_\Omega \left\langle u, v \right\rangle
\end{align*}
for all $v \in \dom \sa$. Hence $u \in \dom A$ and $A u = \lambda u$. As $\phi$ is not constant, $u$ is nontrivial and thus is an eigenfunction of $A$. It is an argument entirely analogous to the one in (i) which proves that the functions $\frac{1}{\sqrt{\lambda_k}} \nabla^\perp \phi_k$ form an orthonormal basis in $\nabla^\perp H_0^1 (\Omega)$.

The remaining assertion follows from (i) and (ii) and the fact that the Helmholtz decomposition of $L^2 (\Omega; \C^2)$ takes the form \eqref{eq:goodDecomposition} if $\Omega$ is simply connected.
\end{proof}

\begin{remark}
In the context of the present problem we are mostly interested in the sesquilinear form $\sa$ rather than the associated operator $A$. However, the computations made in Lemma \ref{lem} indicate that $A$ acts compontentwise as the Laplacian and its domain consists of all $u \in \cH_{\rm N}$ such that $\Delta u_j$ belongs to $L^2 (\Omega)$, $j = 1, 2$, and $u$ is ``irrotational along the boundary'', i.e.\ $\partial_1 u_2 - \partial_2 u_1 = 0$ on $\partial \Omega$ in a weak sense. 
\end{remark}

\begin{remark}\label{rem}
In case $\Omega$ is simply connected the previous theorem implies that the operator $A$ admits a direct sum decomposition $A = A_1 \oplus A_2$, where $A_1$ and $A_2$ are self-adjoint operators in the mutually orthogonal spaces $\nabla H^1 (\Omega)$ and $\nabla^\perp H_0^1 (\Omega)$, respectively. Especially, in this case the spectrum of $A_1$ is given by the non-zero eigenvalues of $- \Delta_{\rm N}$ and the spectrum of $A_2$ consists of all eigenvalues of $- \Delta_{\rm D}$. From this one may deduce alternative variational principles for the eigenvalues of the two operators separately. We do not go further into these details here as this is not of any use for the primary goal of this article. However, we would like to point out that the direct sum decomposition of $A$ and its corresponding quadratic form $\sa$ relies crucially on the fact that eigenfunctions of $- \Delta_{\rm N}$ and $- \Delta_{\rm D}$ belong to the Sobolev space $H^2 (\Omega)$, which requires some regularity of $\partial \Omega$ and, in particular, excludes non-convex corners.
\end{remark}

As a direct consequence of the previous theorem we obtain the following variational principles for the Neumann and Dirichlet Laplacian eigenvalues. Recall that $\cH_{\rm N}$ is given in \eqref{eq:HN}.

\begin{theorem}\label{thm:minMaxDN}
Assume that $\Omega \subset \R^2$ is a bounded, simply connected domain with piecewise $C^\infty$-smooth boundary and that all corners are convex. Denote by
\begin{align*}
 \eta_1 \leq \eta_2 \leq \dots
\end{align*}
the union of the positive eigenvalues of $- \Delta_{\rm N}$ and $- \Delta_{\rm D}$, counted according to their multiplicities.\footnote{In other words, the $\eta_j$ are the positive eigenvalues of the direct sum $- \Delta_{\rm N} \oplus - \Delta_{\rm D}$ in $L^2 (\Omega) \oplus L^2 (\Omega)$.} Then
\begin{align}\label{eq:minMax}
 \eta_j = \min_{\substack{F \subset \cH_{\rm N}~\text{subspace} \\ \dim F = j}} \max_{u = \binom{u_1}{u_2} \in F \setminus \{0\}} \frac{\int_\Omega \left( |\nabla u_1|^2 + |\nabla u_2|^2 \right) - \int_{\partial \Omega} \kappa  \big( |u_1|^2 + |u_2|^2 \big)}{\int_\Omega \left( |u_1|^2 + |u_2|^2 \right)}.
\end{align}
Especially, the first positive eigenvalue $\mu_2$ of $- \Delta_{\rm N}$ is given by
\begin{align}\label{eq:minPrincipleNeumann}
 \mu_2 = \min_{u = \binom{u_1}{u_2} \in \cH_{\rm N} \setminus \{0\}} \frac{\int_\Omega \left( |\nabla u_1|^2 + |\nabla u_2|^2 \right) - \int_{\partial \Omega} \kappa  \big( |u_1|^2 + |u_2|^2 \big)}{\int_\Omega \left( |u_1|^2 + |u_2|^2 \right)}.
\end{align}
Moreover, if $\psi$ is an eigenfunction of $- \Delta_{\rm N}$ corresponding to $\mu_2$ then the minimum in \eqref{eq:minPrincipleNeumann} is attained at $u = \nabla \psi$, and, conversely, each minimizer $u$ of \eqref{eq:minPrincipleNeumann} satisfies $u = \nabla \psi$ for some $\psi \in \ker (- \Delta_{\rm N} - \mu_2)$.
\end{theorem}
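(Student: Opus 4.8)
The plan is to deduce Theorem \ref{thm:minMaxDN} almost entirely from Theorem \ref{thm:translateEV} together with the abstract min-max principle in Proposition \ref{prop:Kato}. First I would observe that the quotient appearing on the right-hand side of \eqref{eq:minMax} is precisely the Rayleigh quotient $\sa[u]/\|u\|_{L^2(\Omega;\C^2)}^2$ for the sesquilinear form $\sa$ of Proposition \ref{prop:form}, whose form domain is $\dom\sa=\cH_{\rm N}$. By Proposition \ref{prop:form} the form $\sa$ is symmetric, semi-bounded below, closed, and densely defined, and its form domain is compactly embedded in $L^2(\Omega;\C^2)$ since $\cH_{\rm N}$ carries a norm equivalent to the $H^1$-norm (as shown in the proof of Proposition \ref{prop:form}) and $H^1(\Omega;\C^2)\hookrightarrow L^2(\Omega;\C^2)$ compactly. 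Hence Proposition \ref{prop:Kato}(ii) applies to the associated operator $A$: its spectrum is purely discrete, its eigenvalues $\eta_1\le\eta_2\le\cdots$ are given by the min-max formula over $j$-dimensional subspaces of $\cH_{\rm N}$, and the minimizers of the lowest quotient are exactly the eigenfunctions of $A$ for $\eta_1$.

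Next I would invoke Theorem \ref{thm:translateEV}: since $\Omega$ is simply connected with piecewise $C^\infty$ boundary and convex corners, that theorem identifies the spectrum of $A$, counted with multiplicities, with the union of the positive eigenvalues of $-\Delta_{\rm N}$ and $-\Delta_{\rm D}$. Therefore the abstract eigenvalues $\eta_j$ from the min-max principle coincide with the $\eta_j$ defined in the statement as this union, and \eqref{eq:minMax} follows verbatim from the min-max formula in Proposition \ref{prop:Kato}(ii). The special case \eqref{eq:minPrincipleNeumann} is then just the $j=1$ instance, once we check that $\eta_1=\mu_2$: by Theorem \ref{thm:translateEV} the full list of $\eta_j$'s is the increasingly ordered merge of $\{\mu_2,\mu_3,\dots\}$ and $\{\lambda_1,\lambda_2,\dots\}$, and since $\mu_2\le\lambda_2$ always holds (in fact $\mu_2<\lambda_1$ fails in general, but $\mu_2$ is the smallest Neumann eigenvalue above $0$, and one knows $\mu_2 \le \lambda_1$ — I would cite the standard Friedlander/Filonov inequality, or simply note that whichever of $\mu_2,\lambda_1$ is smaller equals $\eta_1$, and then use that a minimizer is a gradient, which by Theorem \ref{thm:translateEV}(i) forces $\eta_1=\mu_2$). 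Cleanest: I would argue directly that $\eta_1=\mu_2$ by exhibiting $\nabla\psi$ as an admissible competitor with quotient $\mu_2$ (Theorem \ref{thm:translateEV}(i)) giving $\eta_1\le\mu_2$, and conversely noting every eigenfunction of $A$ for $\eta_1$ lies in $\nabla H^1(\Omega)\oplus\nabla^\perp H^1_0(\Omega)$ and hence, by the orthogonal decomposition and Theorem \ref{thm:translateEV}, has eigenvalue in $\{\mu_k\}_{k\ge2}\cup\{\lambda_k\}_{k\ge1}$, so $\eta_1\ge\min\{\mu_2,\lambda_1\}$; since $\mu_2\le\lambda_1$ by Filonov's inequality, $\eta_1=\mu_2$.

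For the final sentence about minimizers, I would combine the last assertion of Proposition \ref{prop:Kato}(ii) — that $u$ minimizes \eqref{eq:minPrincipleNeumann} if and only if $Au=\eta_1 u=\mu_2 u$ — with the structural description of the $\mu_2$-eigenspace of $A$ from Theorem \ref{thm:translateEV}. Indeed, if $\psi\in\ker(-\Delta_{\rm N}-\mu_2)$ then Theorem \ref{thm:translateEV}(i) gives $A\nabla\psi=\mu_2\nabla\psi$ with $\nabla\psi\ne0$, so $u=\nabla\psi$ is a minimizer. Conversely, a minimizer $u$ is a $\mu_2$-eigenfunction of $A$; decomposing $L^2(\Omega;\C^2)=\nabla H^1(\Omega)\oplus\nabla^\perp H^1_0(\Omega)$ and using that $A$ respects this decomposition (Remark \ref{rem}) with the second summand contributing only eigenvalues $\lambda_k$, which are $\ge\lambda_1>\mu_2$ is \emph{not} automatic — so here one genuinely uses $\mu_2<\lambda_1$ (Filonov) to conclude that the $\mu_2$-eigenspace of $A$ lies entirely in $\nabla H^1(\Omega)$, whence $u=\nabla\phi$ with $\mu_2\int\phi\psi_k=\int\langle\nabla\phi,\nabla\psi_k\rangle$ showing $\phi$ is a $\mu_2$-eigenfunction of $-\Delta_{\rm N}$. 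The main obstacle is therefore not any hard analysis but this bookkeeping point: ensuring $\eta_1=\mu_2$ and that no Dirichlet eigenvalue sneaks in at the bottom, which rests on the inequality $\mu_2<\lambda_1$; I would make sure to state this inequality explicitly (it is classical, due to Filonov, and elementary for simply connected planar domains) and reference it, since everything else is a direct transcription of the abstract min-max principle through Theorem \ref{thm:translateEV}.
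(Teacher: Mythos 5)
Your proposal is correct and follows essentially the same route as the paper: identify the Rayleigh quotient with the form $\sa$ of Proposition~\ref{prop:form}, invoke the abstract min-max principle of Proposition~\ref{prop:Kato}, translate the spectrum of $A$ through Theorem~\ref{thm:translateEV}, and pin down $\eta_1=\mu_2$ and the characterization $\ker(A-\eta_1)=\nabla\ker(-\Delta_{\rm N}-\mu_2)$ via the classical inequality $\mu_2<\lambda_1$ (P\'olya, Filonov), exactly as in the paper's short proof. One stray parenthetical remark --- ``$\mu_2<\lambda_1$ fails in general'' --- is false (Filonov proves the strict inequality $\mu_{k+1}<\lambda_k$ for all bounded domains in dimension $\ge 2$), but you immediately self-correct and rely on the true inequality, so the argument is unaffected.
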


\begin{proof}
By Theorem \ref{thm:translateEV} the $\eta_j$ are precisely the eigenvalues of the operator $A$, and its corresponding sesquilinear form is given in Proposition \ref{prop:form} and has $\cH_{\rm N}$ as its domain. From this the min-max principle \eqref{eq:minMax} follows immediately; cf.\ Proposition \ref{prop:Kato}. Moreover, \eqref{eq:minPrincipleNeumann} and the statement about the minimizers follow if we see that $\eta_1 = \mu_2$ and $\ker (A - \eta_1) = \nabla \ker (- \Delta_{\rm N} - \mu_2)$. But this follows directly from the well-known inequality $\mu_2 < \lambda_1$, see \cite{P52,F05}.
\end{proof}

\section{Application to the hot spots conjecture}\label{sec:hotSpots}

In this section we apply the variational principle obtained in Theorem \ref{thm:minMaxDN} to give an elementary and purely analytic proof of the hot spots conjecture for a class of planar domains. The following notion was introduced by Burdzy and Chen in \cite{BC98}.

\begin{definition}\label{def:lip}
A bounded Lipschitz domain $\Omega \subset \R^2$ is called {\em lip domain} if
\begin{align*}
 \Omega = \left\{ (x, y)^\top: f_1 (x) < y < f_2 (x), x \in (a, b) \right\},
\end{align*}
where $f_1, f_2 : [a, b] \to \R$ are Lipschitz continuous functions with Lipschitz constant at most one such that $f_1 (x) < f_2 (x)$ for all $x \in (a, b)$, $f_1 (a) = f_2 (a)$ and $f_1 (b) = f_2 (b)$.
\end{definition}

\begin{figure}[h]
\begin{center}
\begin{tikzpicture}[rotate=-45,transform shape]
\coordinate (a) at (6,-1);
\coordinate (b) at (10, -0.5);
\coordinate (c) at (11,-0.1);
\coordinate (d) at (11,3.24);
\coordinate (e) at (11,1);
\draw[thick] (a) edge (b);
\draw[thick] (e) edge (d);
\node[rotate=45] at (8.5,0.6) {$\Omega$};
\draw[scale=1,domain=6:11,smooth,variable=\x,thick] plot ({\x},{sqrt(\x - 6) + 1});
\draw[scale=1,domain=10:11,smooth,variable=\x,thick] plot ({\x},{1.5*(sqrt(\x - 10) - 0.5/1.5)});
\draw[thick] (a) edge (6,1);
\end{tikzpicture}
\end{center}
\caption{A lip domain.}
\label{fig:domains}
\end{figure}
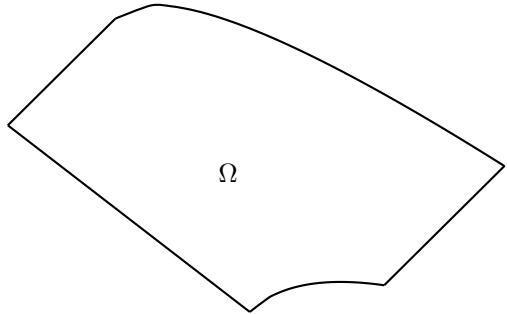

Figure~\ref{fig:domains} depicts an example of a lip domain, see also Figure~\ref{fig:cor} below. To name a few simple cases, right and obtuse triangles belong to the class of lip domains, while acute triangles don't; right and obtuse trapezoids (in particular all parallelograms) belong to it, while acute trapezoids don't. It is an immediate consequence of the definition that each lip domain is simply connected.

Before proceeding to the proof of Theorem \ref{thm:main} we point out that the requirement on the functions $f_1, f_2$ that constitute the boundary of a lip domain to have Lip\-schitz constant at most one is equivalent to the outer normal vector $\nu (x)$ having its angle with the positive real axis in $[\frac{\pi}{4}, \frac{3 \pi}{4}] \cup [\frac{5 \pi}{4}, \frac{7 \pi}{4}]$ for almost all $x \in \partial \Omega$. Hence, rotating any lip domain by $\pi/4$ in positive direction leads to a domain for which $\nu (x)$ belongs to $Q_2 \cup Q_4$, the union of the second and fourth closed quadrant in the plane, for almost all $x \in \partial \Omega$. Conversely, any simply connected Lipschitz domain with the latter property can be rotated into a lip domain; cf.\ also the discussion in \cite[Proposition 7.4]{JM22}. Now we prove Theorem \ref{thm:main}.

\begin{proof}[Proof of Theorem \ref{thm:main}]
To prove Theorem \ref{thm:main}, let us assume that $\Omega$ is rotated such that the outer unit normal $\nu (x) = (\nu_1 (x), \nu_2 (x))^\top$ belongs to $Q_2 \cup Q_4$ for almost all $x \in \partial \Omega$, see the discussion above this proof. Let $\psi$ be an arbitrary eigenfunction of $- \Delta_{\rm N}$ corresponding to $\mu_2$, without loss of generality real-valued. Then
\begin{align*}
 u := \binom{u_1}{u_2} := \nabla \psi \in \cH_{\rm N}
\end{align*}
is a minimzer of \eqref{eq:minPrincipleNeumann}. The assumption on the direction of the normal vectors can be rewritten as
\begin{align*}
 \nu_1 (x) \nu_2 (x) \leq 0 \quad \text{for almost all}~x \in \partial \Omega,
\end{align*}
and as
\begin{align}\label{eq:normals}
 \nu_1 u_1 + \nu_2 u_2 = 0 \quad \text{almost everywhere on}~\partial \Omega
\end{align}
it follows 
\begin{align}\label{eq:crucial}
 u_1 u_2 \geq 0 \quad \text{almost everywhere on}~\partial \Omega.
\end{align}
In turn, by \eqref{eq:normals}--\eqref{eq:crucial},
\begin{align*}
 v := \binom{v_1}{v_2} := \binom{|u_1|}{|u_2|}
\end{align*}
belongs to $\cH_{\rm N}$. Since, moreover,
\begin{align*}
 \frac{\int_\Omega \big( |\nabla v_1|^2 + |\nabla v_2|^2 \big) - \int_{\partial \Omega} \kappa \big( |v_1|^2 + |v_2|^2 \big)}{\int_\Omega \left( |v_1|^2 + |v_2|^2 \right)}
\end{align*}
is less or equal to the same expression with $v$ replaced by $u$, $v$ is a minimizer of \eqref{eq:minPrincipleNeumann} as well. Hence there exists an eigenfunction $\psi'$ of $- \Delta_{\rm N}$ corresponding to $\mu_2$ such that $\nabla \psi' = v$. But then each component of $\nabla \psi'$ is non-negative, vanishes at each corner of $\partial \Omega$, and 
\begin{align*}
 \Delta \partial_j \psi' = - \mu_2 |u_j| \leq 0, \quad j = 1, 2,
\end{align*}
so that, by the minimum principle for superharmonic functions, $v_j$ equals zero constantly if it vanishes at any point inside $\Omega$, and thus the same is true for $u_j$. However, if one component of $u = \nabla \psi$ is identically zero, say $u_2 = 0$ constantly in $\Omega$, then \eqref{eq:normals} implies 
\begin{align*}
 \partial_1 \psi (x) = u_1 (x) = 0 \quad \text{for almost all}~x \in \partial \Omega~\text{such that}~\nu (x) \neq \e_2.
\end{align*}
If $\Omega$ is not a rectangle, as $\Omega$ does only have convex corners, $\partial \Omega$ must contain a piece $\Sigma$ of positive length such that $\nu (x) \notin \{\e_1, \e_2\}$ for almost all $x \in \Sigma$. Then $\nabla \psi = 0$ identically on $\Sigma$, that is $\psi = \alpha$ identically on $\Sigma$ for some constant $\alpha$. Moreover, as $\Sigma$ is not parallel to the $x_2$-axis and $\partial_2 \psi = u_2 = 0$ constantly in $\Omega$, there exists a nonempty open subset $\cO \subset \Omega$ bordering on $\Sigma$ such that $\psi = \alpha$ constantly on $\cO$. However, this would imply $0 = - \Delta \psi = \mu_2 \psi$ on $\cO$, thus $\psi = 0$ identically on $\cO$ and, by unique continuation, $\psi = 0$ constantly on $\Omega$, a contradiction. Hence if $\Omega$ is not a rectangle then both partial derivatives of $\psi$ are positive everywhere in $\Omega$. This proves assertion (ii).

Let us turn to the proof of assertion (i). If $\Omega$ is a rectangle then an explicit computation of the eigenvalues and eigenfunctions yields that the assertion (i) is true whenever $\Omega$ is not a square. Therefore assume that $\Omega$ is different from a rectangle and that there exist two linearly independent eigenfunctions $\psi, \psi'$ of $- \Delta_{\rm N}$ corresponding to $\mu_2$. Let 
\begin{align*}
 u := \binom{u_1}{u_2} := \nabla \psi \quad \text{and} \quad u' := \binom{u_1'}{u_2'} := \nabla \psi'.
\end{align*}
Then the scalar functions $u_1$ and $u_1'$ are linearly independent. Indeed, if there exist constants $\alpha, \alpha'$ such that $\alpha u_1 + \alpha' u_1' = 0$ constantly then $\partial_1 (\alpha \psi + \alpha' \psi') = 0$ constantly in $\Omega$. Since $\alpha \psi + \alpha' \psi' \in \ker (- \Delta_{\rm N} - \mu_2)$ and $\Omega$ is not a rectangle, it follows that $\alpha \psi + \alpha' \psi' = 0$ constantly in $\Omega$; cf.\ the proof of (ii). However, by linear independence of $\psi$ and $\psi'$, we conclude $\alpha = \alpha' = 0$. Consequently, for any $x_0 \in \Omega$ there exist $\beta, \beta'$ such that $\beta u_1 + \beta' u_1'$ is nontrivial and vanishes at $x_0$. On the other hand, $\beta u + \beta' u'$ is nontrivial and minimizes \eqref{eq:minPrincipleNeumann}. By the same reasoning as in the proof of (ii), this leads to a contradiction. The proof is complete.
\end{proof}

The statement of Theorem \ref{thm:main} (ii) implies explicit assertions on the position of the ``hottest and coldest spots''. For instance the following holds; cf. Figure \ref{fig:cor}.

\begin{corollary}\label{cor:main}
Let $\Omega$ be a lip domain with piecewise $C^\infty$-smooth boundary such that all corners are convex. Let $v_0, v_1$ be the unique leftmost respectively rightmost points of $\partial \Omega$ and assume that $\partial \Omega$ does not contain any piece of positive length parallel to $\mathbf{e}_1 + \mathbf{e}_2$ or $\mathbf{e}_1 - \mathbf{e}_2$. Then the eigenfunction $\psi$ of the Neumann Laplacian corresponding to the first positive eigenvalue $\mu_2$ may be chosen such that $\psi$ takes its minimum only at $v_0$ and its maximum only at $v_1$.
\end{corollary}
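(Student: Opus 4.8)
The plan is to deduce Corollary \ref{cor:main} directly from Theorem \ref{thm:main}(ii) by using the monotonicity of $\psi$ along the two orthogonal directions $\mathbf{e}_1 + \mathbf{e}_2$ and $\mathbf{e}_1 - \mathbf{e}_2$ to locate its extrema. First I would fix the rotation used in the proof of Theorem \ref{thm:main}, so that the outer normal lies in $Q_2 \cup Q_4$ and both partial derivatives $\partial_1 \psi$, $\partial_2 \psi$ are strictly positive inside $\Omega$. After undoing that rotation (or equivalently working in the rotated coordinates), this says precisely that the directional derivatives of $\psi$ in the directions $\mathbf{e}_1 + \mathbf{e}_2$ and $\mathbf{e}_1 - \mathbf{e}_2$ are positive in $\Omega$; by continuity up to the (piecewise smooth) boundary away from corners, they are nonnegative on $\overline{\Omega}$, with the extrema of $\psi$ forced onto $\partial \Omega$.

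Next I would introduce the linear functionals $g(z) = \langle z, \mathbf{e}_1 + \mathbf{e}_2\rangle$ and $h(z) = \langle z, \mathbf{e}_1 - \mathbf{e}_2\rangle$ on $\R^2$; positivity of the two directional derivatives means that $\psi$ is strictly increasing along any segment inside $\overline{\Omega}$ on which $g$ is nondecreasing and $h$ is constant, and likewise along any segment on which $h$ is nondecreasing and $g$ is constant. The leftmost point $v_0$ and the rightmost point $v_1$ of the lip domain are exactly the points of $\partial\Omega$ minimizing, respectively maximizing, the first coordinate $x$; since for a lip domain the whole closure $\overline\Omega$ is ``monotone'' with respect to the coordinate directions rotated by $\pi/4$, the point $v_0$ is the unique minimizer of $g$ on $\overline{\Omega}$ among points where additionally... — more precisely, I would argue that every point $p \in \overline{\Omega}$ can be joined to $v_0$ by a two-segment polygonal path inside $\overline{\Omega}$, first decreasing $g$ with $h$ fixed and then decreasing $h$ with $g$ fixed (or the reverse), along which $\psi$ strictly decreases unless the path is degenerate; the geometry of a lip domain (graphs of two $1$-Lipschitz functions) guarantees such a path exists and stays in $\overline{\Omega}$. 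Hence $\psi(p) > \psi(v_0)$ for all $p \neq v_0$, so the minimum of $\psi$ is attained only at $v_0$; the argument for the maximum at $v_1$ is symmetric.

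The main obstacle, and the step needing the most care, is the claim that any point of $\overline{\Omega}$ is connected to $v_0$ (resp. $v_1$) by an admissible monotone polygonal path lying in $\overline{\Omega}$, and that such a path can be chosen genuinely strictly monotone — this is where the hypothesis that $\partial\Omega$ contains no segment of positive length parallel to $\mathbf{e}_1 \pm \mathbf{e}_2$ enters, ruling out the degenerate case where $\psi$ could be constant along a boundary arc and the extremum would be attained on a set rather than a single point. Concretely, in the rotated coordinates a lip domain is the region between the graphs of two functions, and horizontal/vertical segments from an interior or boundary point towards the corner $v_0$ remain in the closure precisely because of the Lipschitz-one (equivalently, $Q_2 \cup Q_4$ normal) condition; I would spell this out by writing $\Omega$, after rotation, as $\{(s,t) : g_1(s) < t < g_2(s),\ s \in (\alpha,\beta)\}$ with $g_1, g_2$ Lipschitz and noting $v_0 = (\alpha, g_1(\alpha)) = (\alpha, g_2(\alpha))$, then checking that the path first moving left (decreasing $s$) at fixed $t$ until hitting the left portion of $\partial\Omega$ and then following down to $v_0$ stays admissible. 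Once this path lemma is in place, strict monotonicity of $\psi$ along nondegenerate admissible segments — which is immediate from strict positivity of the directional derivatives in the interior together with the no-parallel-segment hypothesis to handle paths touching the boundary — finishes the proof.
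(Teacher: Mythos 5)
The paper itself gives no explicit proof of Corollary \ref{cor:main}; it only asserts that the statement ``follows from'' Theorem~\ref{thm:main}~(ii). Your overall plan --- deduce the location of the extrema from the monotonicity of $\psi$ in the two directions $\mathbf{e}_1\pm\mathbf{e}_2$ --- is therefore exactly the intended route. Your argument that the extrema must lie on $\partial\Omega$ (no interior critical points) and that $\psi(v_0)\leq\psi(q)\leq\psi(v_1)$ for $q\in\partial\Omega$ (boundary monotonicity) is sound. However, there are two concrete gaps in the step that is actually at issue, namely the \emph{uniqueness} of the minimizer.

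First, your two-segment polygonal path does not in general stay in $\overline\Omega$. Working in the rotated coordinates $(s,t)$ where $\partial_s\psi>0$, $\partial_t\psi<0$, the point $v_0$ is the unique point of $\overline\Omega$ minimizing $s$ and maximizing $t$. The axis-parallel two-segment path from $p$ to $v_0$ passes through $(s(v_0),t_p)$ or $(s_p,t(v_0))$, and both of these lie outside $\overline\Omega$ whenever $p\neq v_0$; indeed, no axis-parallel segment of positive length emanating from $v_0$ stays in $\overline\Omega$. Your later, corrected version (axis-parallel segment until $\partial\Omega$, then follow the boundary arc to $v_0$) does stay in $\overline\Omega$, but the second leg is a boundary arc, not an admissible segment, and along it you only know that $\psi$ is \emph{nondecreasing}.

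Second, and more seriously, the claim that strictness along boundary-touching paths ``is immediate\dots together with the no-parallel-segment hypothesis'' is not substantiated and is not in fact immediate. For a boundary point $p\neq v_0$ lying on the arc from which the axis-parallel segment towards $v_0$ is degenerate, the whole path lies on $\partial\Omega$, and what must be ruled out is that $\psi$ is constant on the boundary arc $\Gamma$ between $v_0$ and $p$. Combining $\partial_\tau\psi=0$ on $\Gamma$ with the Neumann condition $\partial_\nu\psi=0$ would give $\nabla\psi=0$ on all of $\Gamma$. Excluding this requires a genuine boundary argument --- for instance: differentiate $\nabla\psi=0$ along $\Gamma$ to get $\mathrm{Hess}\,\psi=-\mu_2\,\psi\,\nu\nu^\top$ there, and apply Hopf's lemma to the nonnegative superharmonic functions $\partial_{\mathbf{e}_1+\mathbf{e}_2}\psi$ and $\partial_{\mathbf{e}_1-\mathbf{e}_2}\psi$ (both strictly positive in $\Omega$, zero on $\Gamma$) to obtain $\partial_\nu\partial_{\mathbf{e}_1\pm\mathbf{e}_2}\psi<0$, which after the Hessian identity forces $|\nu_1|>|\nu_2|$ on $\Gamma$, contradicting the lip condition $|\nu_1|\leq|\nu_2|$. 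The no-parallel-segment hypothesis alone does not carry this step, and you give no argument in its place; this is a real gap, not a technicality. I would suggest replacing the path construction for boundary points by the above boundary-monotonicity-plus-Hopf argument, or at least spelling out how the constancy of $\psi$ on a boundary subarc is to be excluded.
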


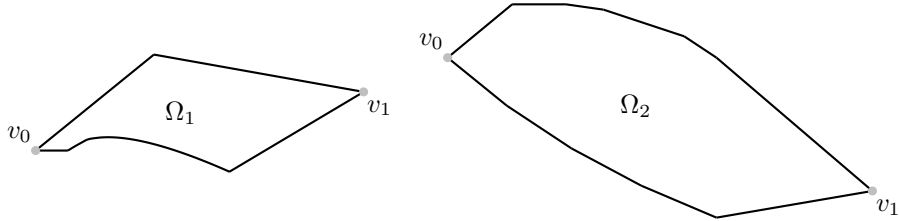
\begin{figure}[h]
\begin{center}
\begin{minipage}{5.3cm}
\begin{tikzpicture}[rotate=-45,transform shape]
\coordinate (a) at (6,-1);
\coordinate (b) at (6.2, 1);
\coordinate (c) at (8,0.6);
\coordinate (d) at (8.5,2.6);
\coordinate (e) at (6.3,-0.7);
\draw[thick] (a) edge (b);
\draw[thick] (b) edge (d);
\draw[thick] (d) edge (c);
\draw[thick] (a) edge (e);
\draw[scale=1,domain=6.3:8,smooth,variable=\x,thick] plot ({\x},{sqrt(\x - 6.3) - 0.7});
\node[rotate=45] at (7,0.7) {$\Omega_1$};
\draw[lightgray, fill] (a) circle (1.5pt);
\node[rotate=45] at (5.7,-1) {$v_0$};
\draw[lightgray, fill] (d) circle (1.5pt);
\node[rotate=45] at (8.8,2.6) {$v_1$};
\end{tikzpicture}
\end{minipage}
\begin{minipage}{6.4cm}
\begin{tikzpicture}[rotate=-45,transform shape]
\coordinate (a) at (6,-1);
\coordinate (a1) at (7,-0.9);
\coordinate (a2) at (8,-0.7);
\coordinate (a3) at (9,-0.4);
\coordinate (b) at (10, 0);
\coordinate (c) at (11,0.5);
\coordinate (d) at (11.2,1.7);
\coordinate (e) at (8.5,1.5);
\coordinate (f) at (8,1.4);
\coordinate (g) at (7,0.9);
\coordinate (h) at (6.6,0.6);
\coordinate (i) at (6.1,0.1);
\draw[thick] (a) edge (a1);
\draw[thick] (a1) edge (a2);
\draw[thick] (a2) edge (a3);
\draw[thick] (a3) edge (b);
\draw[thick] (b) edge (d);
\draw[thick] (d) edge (e);
\draw[thick] (e) edge (f);
\draw[thick] (f) edge (g);
\draw[thick] (g) edge (h);
\draw[thick] (h) edge (i);
\draw[thick] (i) edge (a);
\node[rotate=45] at (8.2,0.3) {$\Omega_2$};
\draw[lightgray, fill] (a) circle (1.5pt);
\node[rotate=45] at (5.7,-1) {$v_0$};
\draw[lightgray, fill] (d) circle (1.5pt);
\node[rotate=45] at (11.5,1.7) {$v_1$};
\end{tikzpicture}
\end{minipage}
\end{center}
\caption{Examples of domains having a unique ``coldest spot'' $v_0$ and ``hottest spot'' $v_1$ according to Corollary \ref{cor:main}, up to interchanging the two.}
\label{fig:cor}
\end{figure}

\begin{remark}
The approach to the hot spots conjecture via the variational principle proved in this article might work for more general domains. If, for instance, one may guarantee that there is a directional derivative of $\psi$ which does not have any proper sign change on the boundary, say $\partial_1 \psi \geq 0$ on $\partial \Omega$, then it follows from \eqref{eq:minPrincipleNeumann} by the same reasoning as in the above proof that $\partial_1 \psi$ does not change sign in the interior of the domain either and then even $\partial_1 \psi > 0$ in $\Omega$; in particular, $\psi$ has no critical point in $\Omega$. 
\end{remark}

\begin{remark}
Above we have used, for convenience, the well-known fact that $\mu_2 < \lambda_1$ is true, so that the first eigenvalue $\eta_1$ of the operator $A$ equals $\mu_2$. However, for lip domains this can be concluded with the help of the variational principle \eqref{eq:minMax}. In fact, assuming $\eta_1 = \lambda_1$ and reasoning as in the proof of Theorem \ref{thm:main} would yield that the components of $\nabla^\perp \phi$ for the corresponding eigenfunction $\phi$ of $- \Delta_{\rm D}$ are non-negative in $\Omega$. Thus the function $\phi$ is monotonous in both coordinate directions. As $\phi$ equals zero constantly on $\partial \Omega$, this implies $\phi = 0$ constantly, a contradiction.
\end{remark}

%
%
%
%
%

\section*{Acknowledgements}
The author gratefully acknowledges financial support by the grant no.\ 2018-04560 of the Swedish Research Council (VR). Furthermore, he wishes to thank all people with whom he had conversations about the hot spots conjecture in the past. These include James B.\ Kennedy, David Krej\v{c}i\v{r}\'ik, Vladimir Lotoreichik and Sugata Mondal.

\end{document}